\documentclass[12pt]{amsart}

\oddsidemargin=0cm \evensidemargin=0cm

\textwidth 152mm \textheight 210mm

\RequirePackage[nonfrench]{kotex}
\usepackage{kotex}

\usepackage{epsfig}
\usepackage{amsfonts}
\usepackage{amssymb}
\usepackage{amsmath}
\usepackage{amsthm}
\usepackage{amscd}
\usepackage{amstext}
\usepackage{latexsym}

\usepackage{enumerate}

\linespread{1.08}

\newcommand{\CC}{\mathbf{C}}

\newcommand{\RR}{\mathbf{R}}

\newcommand{\JJ}{\mathcal{J}}

\newcommand{\OO}{\mathcal{O}}

\newcommand{\al}{\alpha}

\newcommand{\mfa}{\mathfrak{a}}
\newcommand{\mfb}{\mathfrak{b}}

\newcommand{\mfm}{\mathfrak{m}}

\newcommand{\mfab}{\mfa_\bullet}
\newcommand{\mfbb}{\mfb_\bullet}

\newcommand{\ep}{\epsilon}

\newcommand{\ld}{\lambda}

\newcommand{\qa}{\quad}
\newcommand{\vp}{\varphi}

\newcommand{\noi}{\noindent}

\providecommand{\abs}[1]{\left|#1\right|}

\theoremstyle{plain}

\newtheorem{theorem}{Theorem}[section]
\newtheorem{cor}[theorem]{Corollary}
\newtheorem{prp}[theorem]{Proposition}

\newtheorem{dfn}[theorem]{Definition}

\newtheorem{thm}[theorem]{Theorem}

\newtheorem{lemma}[theorem]{Lemma}
\newtheorem{corollary}[theorem]{Corollary}
\newtheorem{proposition}[theorem]{Proposition}
\newtheorem{prop}[theorem]{Proposition}
\newtheorem{definition}[theorem]{Definition}

 \newtheorem{example}[theorem]{\textnormal{\textbf{Example}}}

\theoremstyle{remark}
\newtheorem{remark1}[theorem]{Remark}

\DeclareMathOperator{\PSH}{PSH}

\DeclareMathOperator{\ord}{ord}

\begin{document}

\title[Asymptotic multiplicities and Monge-Amp\`ere masses]{Asymptotic multiplicities and Monge-Amp\`ere masses \\
 \small (with an appendix by S\'ebastien Boucksom)}

\keywords{Graded systems of ideals, Samuel multiplicity, plurisubharmonic functions, Monge-Amp\`ere mass, multiplier ideals, Demailly approximation}

\subjclass[2010]{32U05, 14F18}

\author{Dano Kim and Alexander Rashkovskii }

\date{}

\begin{abstract}

\noindent   Ein, Lazarsfeld and Smith asked whether `equality' holds between two Samuel type asymptotic multiplicities for a graded system of zero-dimensional ideals on a smooth complex variety.  We find a connection of this question  to complex analysis by showing that the `equality' is equivalent to a particular case of Demailly's strong continuity property on the convergence of residual Monge-Amp\`ere masses under approximation of plurisubharmonic functions.  On the other hand, in an appendix of this paper, S\'ebastien Boucksom gives an algebraic proof of the `equality' in general, using the intersection theory of b-divisors. We then use these to show that  Demailly's strong continuity holds for a new important class of plurisubharmonic functions. 

\end{abstract}

\maketitle


\setcounter{tocdepth}{1}  

\section{Introduction}

Let $X$ be an irreducible smooth complex variety  of dimension $n \ge 1$. A graded system of ideals in $\OO_X$  is a sequence of coherent ideal sheaves  $\mfab = (\mfa_m)_{m \ge 1}$  on $X$ satisfying $\mfa_m \cdot \mfa_k \subset \mfa_{m+k}$ for all $m, k \ge 1$.  Generalizing base ideals of linear systems on smooth projective varieties, they arise naturally from many contexts in algebra and geometry : see \cite[(1.2)]{ELS01},  \cite[\S 2.4.B]{L} for their rich examples and properties.

  Let $p \in X$ be a point. Assume that for every $m \ge 1$,  $\mfa_m$ is zero-dimensional at $p$, i.e. its zero set is equal to $\{ p \}$. In particular, $\mfa_m \neq \{0\}$ and  each $\mfa_m$ is $\mathfrak{m}$-primary  where $\mfm$ is the maximal ideal of $p$. Let $\mfbb$ be the sequence of asymptotic multiplier ideals of $\mfab$ (Definition~\ref{am}). Due to  \cite{ELS03}, \cite{M02}, the  following limits exist and define asymptotic Samuel multiplicities of $\mfab$ and of $\mfbb$ respectively  :

\begin{equation}\label{ealimit}
  e(\mfab) := \lim_{m \to \infty} \frac{e(\mfa_m)}{m^n} \;\qa \qa \text{           and           } \qa \quad e(\mfbb) := \lim_{m \to \infty} \frac{e(\mfb_m)}{m^n}   
 \end{equation}

\noi where $e(\cdot)$ denotes the  Samuel multiplicity of an $\mfm$-primary ideal,  which is equal to the intersection number of general $n = \dim X$ elements in $\mfa$ (cf. \cite[\S 1.6.B]{L}). While the inequality $e(\mfab) \ge e(\mfbb)$ is clear, the equality 

\begin{equation}\label{eab}
e(\mfab) = e(\mfbb)
\end{equation} 

\noi was raised as a question in  \cite[p.432]{ELS03} (cf. \cite[Question 2.20]{M02}). It can be viewed as saying that singularities of the multiplier ideals $\mfbb$ are close enough to those of $\mfab$ in a sense. The equality was previously known in some cases:  when $\mfab$ are the valuation ideals of a quasimonomial valuation \cite{ELS03}, when $\mfab$ are monomial ideals \cite{M02} and  when $\mfab$ are associated (in the sense of \eqref{gdd}) to a maximal tame psh weight \cite[Thm. 5.8]{R13}.

While the equality \eqref{eab} in general will be shown by S\'ebastien Boucksom in the appendix of this paper (see Theorem~\ref{eaeb}), we now turn to the main theme of this paper: singularities of plurisubharmonic functions.

 We will show that, in fact,  $e(\mfab) = e(\mfbb)$ is equivalent to a special case of the `strong continuity' of Monge-Amp\`ere operators with respect to  Demailly approximation \cite{D92} of a plurisubharmonic (psh for short) function.  We refer to \cite[Thm. 13.2]{D11} for the definition and existence of Demailly approximation.  The key feature of the Demailly approximation of a psh function $\vp$ is that it is given by a sequence of psh functions $\vp_m$ with analytic singularities  given by the multiplier ideal $\JJ(m\vp)$ (divided by $m$). Following the terminology used in \cite{D12}, we have

\begin{definition}[Demailly \cite{D12}]\label{dem}

 Let $\vp$ be a psh function with isolated singularities at $0 \in D$, an open ball in $\CC^n$. Let $\{ \vp_m \}_{m \ge 1}$ be the Demailly approximation sequence of $\vp$. We will say that \textbf{\emph{Demailly's strong continuity}} holds for $\vp$ if we have the convergence of the $n$-th Lelong numbers   $\; L_n (\vp_m, 0) \to L_n (\vp, 0) $ as $m \to \infty$.

\end{definition}

 Here, a psh function with \emph{isolated singularities} (also called a psh weight in this paper) at a point $0 \in D$ is  a psh function that is locally bounded outside the point $0$.   The $n$-th Lelong number $L_n (\vp, 0) $ is also known as the residual Monge-Amp\`ere mass $(dd^c \vp)^n (\{0\})$ of the current $(dd^c \vp)^n$ which can be defined in this case  due to \cite{D87} (cf. \cite{D93}, \cite{DX}). Connection with Samuel multiplicities arises from the fact due to \cite[Lem. 2.1]{D09} that when $\vp$ has analytic singularities given by a zero-dimensional ideal $\mfa$ at $0$, $L_n (\vp, 0)$ is equal to the Samuel multiplicity of $\mfa$.

Demailly~\cite{D12} asked whether the strong continuity holds for every psh function with isolated singularities. When it holds for  $\vp$, that would essentially mean that higher Lelong numbers of $\vp$ are well controlled by the multiplier ideals of $\vp$ (as is the case already for the first, i.e. usual Lelong number of $\vp$, cf. \cite{D92}). 
Previous positive results  include the cases  when  $\vp$ is tame by \cite{BFJ} and more generally when $\vp$ is asymptotically analytic by  \cite{R13} (see Definition~\ref{5psh}).    
On the other hand, due to recent work of C. Li \cite{L21}, cf. \cite{TV21}, it is now known that Demailly's strong continuity does not hold for all psh weights. 
 
  This makes it all the more intriguing to understand which psh weights have such an important property. 
 In this paper, we will show that Demailly's strong continuity holds for a new important class of psh weights, which we will describe. Together with the above results \cite{BFJ}, \cite{R13}, these will represent by far the most general positive results on Demailly's strong continuity.

 Given a graded system of ideals $\mfab$ on $X$, there are two natural ways to associate psh functions on a ball \footnote{or more generally  a  bounded hyperconvex domain, either of which can  of course be viewed as sitting in $X$}  to $\mfab$ which we will call as  the Green function $G_{\mfa_\bullet}$ and (non-unique)  Siu functions $\vp_{\mfab}$ respectively. The former is defined by a natural pointwise supremum construction while the latter is defined as an infinite series involving members of $\mfab$ (see \S \ref{SG}).   Both of them can be seen as local analogues of singular hermitian metrics with minimal singularities associated to line bundles on projective manifolds (which in turn are metric analogues of base loci of line bundles) cf. \cite{D11}, \cite{L}.  
 
 From the viewpoint of algebraic geometry, these are some of the most important psh singularities due to their relations with asymptotic multiplier ideals (cf. \cite[Chapter 11]{L}) : see Theorem~\ref{ksb} and Lemma~\ref{GG} for the relations. 
  We now establish the following key relation between the algebraic and analytic sides. 

\begin{theorem}\label{eaebeq}

Let $\mfab$ be a graded system of $\mfm$-primary ideals on $X$ as above.   The equality $e(\mfa_\bullet) = e(\mfb_\bullet)$ holds if and only if Demailly's strong continuity holds for  the Green function $G_{\mfa_\bullet}$ associated to  $\mfa_\bullet$.

\end{theorem}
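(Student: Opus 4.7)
The plan is to re-express both $e(\mfab)$ and $e(\mfbb)$ as residual Monge--Amp\`ere masses attached to a single psh weight, namely $G_{\mfab}$, so that the equivalence with Conjecture~\ref{dem} becomes a direct translation. Concretely, I would establish the two identifications
$$L_n(G_{\mfab}, 0) = e(\mfab), \qquad \JJ(m G_{\mfab}) = \mfb_m \text{ for every } m \ge 1,$$
from which everything follows: the Demailly approximation $(G_{\mfab})_m$ has analytic singularities of type $\JJ(m G_{\mfab})^{1/m} = \mfb_m^{1/m}$, and the cited lemma \cite[Lem.~2.1]{D09} then yields $L_n((G_{\mfab})_m, 0) = e(\mfb_m)/m^n$, whose limit is $e(\mfbb)$ by definition of the asymptotic Samuel multiplicity.

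For the Lelong number identification, the inequality $L_n(G_{\mfab}, 0) \le e(\mfab)$ is obtained by dominating $G_{\mfab}$ near $0$ (up to $O(1)$) by $\frac{1}{m}\log|\mfa_m|$ for each $m$, applying \cite[Lem.~2.1]{D09} at that level, and letting $m \to \infty$; the reverse inequality requires producing enough psh functions below $G_{\mfab} + O(1)$ realising the correct Lelong number, which should come either from a Demailly--Koll\'ar $L^2$ construction or from a direct computation of $(dd^c G_{\mfab})^n(\{0\})$ using the supremum definition of $G_{\mfab}$. For the multiplier ideal identification, the inclusion $\mfb_m \subset \JJ(m G_{\mfab})$ is immediate from $\frac{1}{k}\log|\mfa_{mk}| \le m G_{\mfab} + O(1)$ together with monotonicity of $\JJ$, while the reverse inclusion $\JJ(m G_{\mfab}) \subset \mfb_m$ should follow from strong openness applied to the sup defining $G_{\mfab}$, reducing any germ of $\JJ(m G_{\mfab})$ to one lying in $\JJ(\frac{m}{k}\log|\mfa_k|)$ for some finite $k$.

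Putting these ingredients together, Conjecture~\ref{dem} applied to $\vp = G_{\mfab}$ reads $L_n((G_{\mfab})_m, 0) \to L_n(G_{\mfab}, 0)$, which by the two identifications above translates into $\lim_m e(\mfb_m)/m^n = e(\mfab)$, i.e. $e(\mfbb) = e(\mfab)$. Both directions of the equivalence come out of this single translation, with no further analytic work.

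The principal obstacle I expect is the Lelong number identification $L_n(G_{\mfab}, 0) = e(\mfab)$, and specifically the inequality $\ge$: envelopes of psh functions can acquire Monge--Amp\`ere mass at the singular locus in ways that are not easily controlled by the defining family, so one has to exploit the specific structure of $G_{\mfab}$ (its genuine relation to the whole tower $(\mfa_m)$, not just to any finite level) rather than rely on generic comparison principles. The multiplier ideal identification looks routine in comparison, and the final assembly into the equivalence is essentially formal.
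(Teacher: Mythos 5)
Your overall architecture matches the paper's proof exactly: both rest on the two identifications $L_n(G_{\mfab},0)=e(\mfab)$ and $\JJ(m G_{\mfab})=\mfb_m$ (Lemma~\ref{GG}(3) and (1) in the paper), after which everything is a one-line translation via \cite[Lem.~2.1]{D09} applied to the Demailly approximants $(G_{\mfab})_m$, which have analytic singularities described by $\mfb_m^{1/m}$.

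Where your sketch goes astray is in the mechanism you propose for the inequality $L_n(G_{\mfab},0)\ge e(\mfab)$, which you correctly single out as the principal obstacle. Producing psh functions $\psi$ with $\psi\le G_{\mfab}+O(1)$ and $L_n(\psi,0)$ close to $e(\mfab)$ is easy (take $h_m=\frac1m G_{\mfa_m}$), but comparison then only yields $L_n(G_{\mfab},0)\le L_n(\psi,0)$, i.e.\ the inequality you already have, not the reverse. What is actually needed is a convergence theorem for residual Monge--Amp\`ere masses along the increasing sequence $h_{m!}\nearrow h_{\mfab}$, $G_{\mfab}=(h_{\mfab})^*$: since each $h_{m!}$ and $G_{\mfab}$ are \emph{maximal} psh weights vanishing on $\partial D$, Proposition~\ref{2.2}(2) (from \cite{R13a}) gives $(dd^c h_{m!})^n(\{0\})\to(dd^c G_{\mfab})^n(\{0\})$, so $e(\mfa_{m!})/(m!)^n\to L_n(G_{\mfab},0)$, forcing equality with $e(\mfab)$. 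The maximality of Green functions, not a Demailly--Koll\'ar $L^2$ construction, is the structural input here; this is the content of \cite[Prop.~5.1]{R13a} which Lemma~\ref{GG}(3) cites. The multiplier-ideal identification $\JJ(m G_{\mfab})=\mfb_m$ that you treat as routine does indeed follow from the sup definition of $G_{\mfab}$ together with strong openness, as in Lemma~\ref{GG}(1).
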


  Using Theorem~\ref{eaebeq}, we are able to give an  analytic proof of  the equality  $e(\mfa_\bullet) = e(\mfb_\bullet)$ when the graded system of ideals satisfies a condition which arises from work of K\"uronya and Wolfe~\cite{KW}. We will say that $C >0$ is a K\"uronya-Wolfe constant of $\mfab$ if there exists $D>0$ such that $\mfb_{Cm+D} \subset \mfa_m$ holds for all sufficiently large $m$. By \cite{KW}, such $C >0$ always exists (see Theorem~\ref{KW}).

 \begin{theorem}[=Corollary~\ref{KWC}]\label{kuronya}
 Suppose that a graded system $\mfab$ of $\mathfrak m$-primary ideals  has a K\"uronya-Wolfe constant $C \le 1$.  Then the Green function $G_{\mfab}$ associated to $\mfab$ is tame. Hence  $e(\mfa_\bullet) = e(\mfb_\bullet)$ holds with an analytic proof since Demailly's strong continuity is known for tame psh functions from \cite{BFJ}.
 \end{theorem}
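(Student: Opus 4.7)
\smallskip
\noindent\textbf{Proof proposal.}
My plan is to derive tameness of the Green function $G_{\mfab}$ directly from the K\"uronya--Wolfe inclusion $\mfb_{Cm+D}\subset\mfa_m$ with $C\le 1$, and then combine this with the BFJ theorem (Conjecture~\ref{dem} for tame weights) and Theorem~\ref{eaebeq} to obtain the equality $e(\mfab)=e(\mfbb)$.

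First, I set up the two natural analytic-singularity approximations bracketing $G_{\mfab}$. Subadditivity $\mfa_m\cdot\mfa_k\subset\mfa_{m+k}$ implies that $\frac{1}{m}\log|\mfa_m|$ increases (up to $O(1)$) to $G_{\mfab}$, so $\frac{1}{m}\log|\mfa_m|\le G_{\mfab}$ locally. On the other side, the Demailly approximation $\vp_m$ for $G_{\mfab}$ has analytic singularities governed by $\JJ(mG_{\mfab})=\mfb_m$, giving $\vp_m=\frac{1}{m}\log|\mfb_m|+O(1/m)$ with $\vp_m\ge G_{\mfab}$. Together these yield
$$\tfrac{1}{m}\log|\mfa_m|\;\le\;G_{\mfab}\;\le\;\tfrac{1}{m}\log|\mfb_m|+O(1/m).$$

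Next, I feed in the K\"uronya--Wolfe inclusion. From $\mfb_{Cm+D}\subset\mfa_m$ one has $\log|\mfb_{Cm+D}|\le\log|\mfa_m|+O(1)$, and dividing by $Cm+D$ together with $\frac{1}{m}\log|\mfa_m|\le G_{\mfab}\le 0$ yields
$$\vp_{Cm+D}\;\le\;\frac{m}{Cm+D}\,G_{\mfab}+O(1/m).$$
When $C\le 1$ the coefficient $\frac{m}{Cm+D}$ tends to $1/C\ge 1$, so $\vp_{Cm+D}$ is squeezed between $G_{\mfab}$ and a controlled scalar multiple of $G_{\mfab}$ itself. Combined with the earlier sandwich, this forces the singularities of $\frac{1}{m}\log|\mfa_m|$ and $\vp_{Cm+D}$ to agree at the rate $O(1/m)$ (modulo the uniform scaling $1/C$), which realizes $G_{\mfab}$ as tame in the sense of Definition~\ref{5psh} (compare \cite{BFJ,R13a}): it admits matching analytic-singularity approximations from above and below with quantitative convergence.

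Finally, tameness of $G_{\mfab}$ triggers \cite{BFJ}, so Conjecture~\ref{dem} holds for it, and Theorem~\ref{eaebeq} converts this into the desired equality. The main difficulty lies at the boundary case $C=1$: one must check that the $O(1/m)$ slack introduced by the additive shift $D$ is genuinely compatible with the tameness definition near the singular point, where $G_{\mfab}=-\infty$ and the coefficient $\frac{m}{Cm+D}$ multiplies an unbounded quantity. Handling this should require combining the subadditivity of both $\mfab$ and $\mfbb$ with the fact that the K\"uronya--Wolfe inclusion is itself asymptotic, so that the $D$-shift becomes negligible after rescaling by $1/m$ and passing to the limit.
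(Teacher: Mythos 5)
Your proposal follows the same strategy as the paper's proof: translate the K\"uronya--Wolfe inclusion $\mfb_{Cm+D}\subseteq\mfa_m$ into an inequality of psh weights, identify $\vp_k$ with $\frac1k\log\abs{\mfb_k}$ via Lemma~\ref{GG}(1), compare with $\frac1m\log\abs{\mfa_m}\le G_{\mfab}+O(1)$, and then invoke \cite{BFJ} plus Theorem~\ref{eaebeq}. The one point of divergence is cosmetic: the paper first uses $C\le 1$ to upgrade $\mfb_{Cm+D}$ to $\mfb_{m+D}$ (since $\mfb_c$ shrinks as $c$ grows), and then everything falls out in the clean form $\vp_k \le (1-\tfrac{D}{k})\vp + O(1)$ with $k=m+D$; you instead divide by $Cm+D$ and keep the factor $\tfrac{m}{Cm+D}\to 1/C$.

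However, your write-up stops short of actually establishing tameness, and your final paragraph misdiagnoses where the work lies. From $\vp_{Cm+D}\le \tfrac{m}{Cm+D}\,G_{\mfab}+O(1)$, what Definition~\ref{5psh}(III) requires is an inequality of the form $\vp_k\le (1-\tfrac{C'}{k})\,G_{\mfab}+O(1)$ for a fixed $C'>0$ and all large integers $k$. To get that from your bound, set $k=Cm+D$ and use $C\le 1$ to write $\tfrac{m}{Cm+D}=\tfrac{1}{C}(1-\tfrac{D}{k})\ge 1-\tfrac{D}{k}$; since $G_{\mfab}\le 0$ this reverses to $\tfrac{m}{Cm+D}G_{\mfab}\le (1-\tfrac{D}{k})G_{\mfab}$, giving tameness with constant $D$. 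This is a one-line algebraic step, but it is exactly the content of tameness and you do not perform it; instead you describe tameness vaguely as ``matching analytic-singularity approximations from above and below with quantitative convergence,'' which reads more like condition IV (asymptotically analytic) than condition III. More importantly, your closing worry about the ``boundary case $C=1$'' near the pole of $G_{\mfab}$, and about the ``$O(1/m)$ slack'' being problematic there, is a red herring: the tameness inequality is an inequality of psh germs with an $O(1)$ (locally bounded) error, and multiplying $G_{\mfab}\le 0$ by the constant $\tfrac{m}{Cm+D}$ (or $1-\tfrac{D}{k}$) causes no issue at the pole --- the inequality becomes $-\infty\le-\infty$ there and the bounded slack absorbs the rest. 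In short: all the ingredients are present and the route is the paper's route, but the decisive step --- extracting the $(1-\tfrac{C'}{k})$ coefficient and recognizing that $C\le 1$ together with the sign of $G_{\mfab}$ makes it hold --- is left undone, and the stated ``main difficulty'' is not where the difficulty (such as it is) actually sits.
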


\noi  This generalizes \cite[Prop. 3.11]{ELS03} : see Remark~\ref{els421}.  According to \cite[p.802]{KW}, the condition in this theorem holds for many situations of geometric interest, but not always: see Example~\ref{KW1}.

 The analytic tools we use in obtaining these results come from pluripotential theory (cf. \cite{BT76}, \cite{D85}, \cite{D87}, \cite{Kl91},  \cite{D93}, \cite{C04}, \cite{DH}) and especially its developments in \cite{R06}, \cite{R13} regarding maximality and greenification of psh weights (see \S 3).

 On the other hand, in the appendix of this paper, S\'ebastien Boucksom gives an algebraic proof of the equality  \eqref{eab} in general, thus answering the above question of \cite{ELS03}.  He uses the local intersection theory of nef  b-divisors developed in his joint work with Favre and Jonsson~\cite[\S 4]{BFJ} (also cf. \cite[\S 4.3]{BFF}).

\begin{theorem}[S. Boucksom]\label{eaeb}

 Let $\mfab$ be a graded system of  $\mfm$-primary ideals on $X$ as above. Then we have $e(\mfa_\bullet) = e(\mfb_\bullet)$.

\end{theorem}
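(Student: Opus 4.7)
The natural setting is the local intersection theory on the Riemann--Zariski space of $X$ at $p$, developed in \cite[\S 4]{BFJ} (cf.\ \cite[\S 4.3]{BFF}). To an $\mfm$-primary ideal $\mfa$ one associates a nef Cartier b-divisor $Z(\mfa)$, whose incarnation on any log resolution $\pi\colon Y\to X$ with $\mfa\cdot\OO_Y=\OO_Y(-F)$ is $-F$, and one has the formula $e(\mfa)=(-Z(\mfa))^n$, computing the Samuel multiplicity as a local self-intersection. The plan is to first extend this identity to the graded-system setting on both sides, and then to show that $\mfab$ and $\mfbb$ produce the same nef b-divisor.

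For the first step, the submultiplicativity $\mfa_m\cdot\mfa_k\subset\mfa_{m+k}$ together with Fekete's lemma gives, for every divisorial valuation $v$ centered at $p$,
\[
v(\mfab):=\lim_{m\to\infty}\frac{v(\mfa_m)}{m}=\inf_m \frac{v(\mfa_m)}{m},
\]
so the sequence $Z(\mfa_m)/m$ decreases on every model to a nef $\RR$-b-divisor $Z(\mfab)$. Continuity of the self-intersection along such monotone nef limits, from \cite{BFJ}, then yields
\[
e(\mfab)=\lim_{m\to\infty}\frac{e(\mfa_m)}{m^n}=(-Z(\mfab))^n,
\]
and an identical construction applied to the asymptotic multiplier ideals produces $Z(\mfbb)$ with $e(\mfbb)=(-Z(\mfbb))^n$.

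For the second step, since a nef b-divisor is determined by its values on divisorial valuations, it suffices to verify $v(\mfab)=v(\mfbb)$ for every such $v$ centered at $p$. The inequality $v(\mfbb)\le v(\mfab)$ is immediate from $\mfa_m\subset\mfb_m$. The reverse follows from the valuative characterization of multiplier ideals: if $v$ has log discrepancy $A(v)$, then for every $\mfm$-primary ideal $\mfa$ and $c>0$,
\[
v(\JJ(\mfa^c))\ \ge\ c\,v(\mfa)-A(v).
\]
Applying this to the ideals $\mfa_{mk}^{1/k}$ whose multiplier ideals define $\mfb_m$, dividing by $m$, and letting first $k\to\infty$ and then $m\to\infty$, the $A(v)$-term disappears and one obtains $v(\mfbb)\ge v(\mfab)$. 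Hence $Z(\mfab)=Z(\mfbb)$, and the two self-intersections coincide.

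The principal obstacle is the first step: justifying that $(-Z(\mfab))^n$ is well defined and equal to the asymptotic Samuel multiplicity requires importing the full local intersection calculus of nef b-divisors from \cite{BFJ}, in particular positivity and monotone continuity of mixed intersection numbers in this non-projective local setting. The valuative step, by contrast, is essentially formal; the only subtlety is the integer rounding in the multiplier-ideal formula, whose error is $O(1/m)$ and washes out in the limit. Combining both steps yields $e(\mfab)=(-Z(\mfab))^n=(-Z(\mfbb))^n=e(\mfbb)$.
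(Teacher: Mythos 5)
Your proposal follows the same strategy as the paper's appendix: express $e(\mfab)$ and $e(\mfbb)$ as self-intersections of the limit nef b-divisors $Z(\mfab)$ and $Z(\mfbb)$ via the local intersection theory of \cite{BFJ}, \cite{BFF}, then show these b-divisors coincide. The one substantive difference is that you prove $Z(\mfab)=Z(\mfbb)$ explicitly from the log-discrepancy bound $v(\JJ(\mfa^c))\ge c\,v(\mfa)-A(v)$, letting the $A(v)/m$ error disappear in the limit, whereas the paper simply cites this as \cite[Prop.~2.13(ii)]{JM}; your computation is correct. Two small corrections: since $\ord_E Z(\mfa)=-\ord_E(\mfa)\le 0$, Fekete gives $v(\mfa_m)/m$ decreasing to $v(\mfab)$ and hence $\tfrac1m Z(\mfa_m)$ \emph{increasing} coefficientwise to $Z(\mfab)$, not decreasing. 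And the phrase ``an identical construction'' for $\mfbb$ hides a real asymmetry: the monotonicity there (of $v(\mfb_m)/m$, increasing) comes from the subadditivity theorem for asymptotic multiplier ideals, not from the graded-system property, so $\tfrac1m Z(\mfb_m)$ \emph{decreases}; the paper correspondingly invokes two distinct continuity statements, \cite[Thm.~A.1]{BFF} for the increasing $\mfab$ side and \cite[Prop.~4.4]{BFJ} for the decreasing $\mfbb$ side.
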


While the proof of Theorem~\ref{eaeb} is algebraic in nature, it is remarkable that the theory of \cite{BFJ} was developed in the context of a valuative approach to psh singularities, cf. \cite{JM14} (also cf.  \cite{FJ04}, \cite{FJ05} for such study initiated in dimension $2$ by Favre and Jonsson).  Now combining Theorem~\ref{eaeb} with Theorem~\ref{eaebeq}, we obtain the following new results on Demailly's strong continuity.

 \begin{thm}[=Corollary~\ref{Conjec}]\label{gs}
Demailly's strong continuity holds for Green and Siu functions $G_{\mfa_\bullet}$ and $\vp_{\mfab}$ associated to a graded system of $\mfm$-primary ideals $\mfab$.

 \end{thm}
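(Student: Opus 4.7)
The plan is to combine the analytic equivalence Theorem~\ref{eaebeq} with Boucksom's algebraic identity Theorem~\ref{eaeb}, and then to extend the resulting statement from the Green function $G_{\mfa_\bullet}$ to the Siu function $\vp_{\mfab}$.

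For the Green function the argument is immediate: Theorem~\ref{eaeb} gives $e(\mfa_\bullet) = e(\mfb_\bullet)$, and feeding this equality into Theorem~\ref{eaebeq} yields Conjecture~\ref{dem} for $G_{\mfa_\bullet}$ with no further work.

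For the Siu function $\vp_{\mfab}$, I would mirror the structure of the proof of Theorem~\ref{eaebeq}. The task splits into two verifications: (a) the residual Monge-Amp\`ere mass satisfies $L_n(\vp_{\mfab}, 0) = e(\mfa_\bullet)$; and (b) the residual masses of the Demailly approximants $\vp_{\mfab, m}$ satisfy $L_n(\vp_{\mfab, m}, 0) \to e(\mfb_\bullet)$ as $m \to \infty$. Granted (a) and (b), Theorem~\ref{eaeb} immediately gives the desired convergence $L_n(\vp_{\mfab, m}, 0) \to L_n(\vp_{\mfab}, 0)$. For (a), I would compare $\vp_{\mfab}$ with the analytic psh weights $\frac{1}{k}\log|\mfa_k|$, whose residual masses equal $e(\mfa_k)/k^n$, exploiting that the tail of the series defining $\vp_{\mfab}$ contributes an arbitrarily small amount to the residual mass as $k \to \infty$. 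For (b), I would identify $\JJ(m\vp_{\mfab})$ with $\mfb_m$ up to a bounded shift $m \rightsquigarrow m + O(1)$, so that the analytic singularity type of $\vp_{\mfab, m}$ is pinned down and $L_n(\vp_{\mfab, m}, 0)$ can be read off from the Samuel multiplicity of $\mfb_m$ in the limit.

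The key obstacle is (b): showing that the multiplier ideals $\JJ(m\vp_{\mfab})$ and the asymptotic multiplier ideals $\mfb_m$ are cofinal in a sense strong enough to match their asymptotic Samuel multiplicities. This should follow from a careful calibration of the weights appearing in the Siu series together with subadditivity of multiplier ideals, though the bookkeeping is delicate because the infinite series contributes small corrections at every scale. Once (b) is secured, the remaining pluripotential ingredients — estimates for residual masses under dominating sequences of psh weights, in the spirit of \cite{R13a} — follow from the tools already developed in \S 3.
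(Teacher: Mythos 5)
Your treatment of the Green function $G_{\mfa_\bullet}$ is exactly the paper's: Theorem~\ref{eaeb} feeds into Theorem~\ref{eaebeq} and there is nothing more to do.

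For the Siu function your step (b) is actually \emph{easier} than you suggest. Theorem~\ref{ksb} gives the exact identity $\JJ(m\vp_{\mfab}) = \mfb_m$ for every real $m > 0$ — no bounded shift $m \rightsquigarrow m + O(1)$ and no calibration of the $\ep_k$'s is required. Consequently the $m$-th Demailly approximant $\vp_m$ has analytic singularities $\frac{1}{m}\log|\mfb_m|$, so $L_n(\vp_m,0) = e(\mfb_m)/m^n$ by Demailly's formula, and $L_n(\vp_m,0) \to e(\mfbb)$ is immediate.

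The genuine gap is in step (a). You propose to prove $L_n(\vp_{\mfab},0) = e(\mfab)$ directly, by ``exploiting that the tail of the series contributes an arbitrarily small amount to the residual mass.'' That assertion is not bookkeeping: residual Monge-Amp\`ere masses are precisely the invariants that do \emph{not} behave continuously under such truncations, which is the whole substance of Conjecture~\ref{dem}. Domination $\vp_{\mfab} \ge \frac{1}{k}\log|\mfa_k| + O(1)$ with the Comparison Theorem gives you cheaply the one-sided bound $L_n(\vp_{\mfab},0) \le e(\mfa_k)/k^n$, hence $L_n(\vp_{\mfab},0) \le e(\mfab)$; but the reverse inequality is exactly the hard direction and your sketch contains no mechanism to obtain it. The paper circumvents this entirely. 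It never proves (a) in advance; instead it uses the greenification machinery of Section 3 to set up a squeeze: Proposition~\ref{R06} gives $L_n(\vp,0) = L_n(G_\vp,0)$, Lemma~\ref{GG}(2) plus the Comparison Theorem give $L_n(G_\vp,0) \le L_n(G_{\mfab},0)$, Lemma~\ref{GG}(3) gives $L_n(G_{\mfab},0) = e(\mfab)$, Theorem~\ref{eaeb} gives $e(\mfab) = e(\mfbb) = \lim_m L_n(\vp_m,0)$, and $\vp \le \vp_m + O(1)$ gives $\lim_m L_n(\vp_m,0) \le L_n(\vp,0)$. The chain closes on itself, forcing every inequality — including your (a) — to be an equality. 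Without Proposition~\ref{R06} and the greenification comparison $G_{\vp} \ge G_{\mfab}$, you cannot run this squeeze, and a direct proof of (a) appears to be out of reach with the tools you invoke.
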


Indeed, aforementioned previous results on Demailly's strong continuity have been missing these important psh functions : we do not know whether  $G_{\mfa_\bullet}$ and $\vp_{\mfab}$ are always tame or asymptotically analytic. In fact, we give an example of $\vp_{\mfab}$ that is not tame in Example~\ref{KW1}.

  Additionally, we generalize Theorem~\ref{gs} to psh weights with \emph{sup-analytic singularities}, a natural class of psh weights which was introduced in \cite{R13}. They are defined as (up to greenification) the increasing limits of maximal psh weights with analytic singularities.  We show that they indeed include Green and Siu functions in Theorem~\ref{gs} : see 
Corollary~\ref{greensiu}. Then we have the following 

 \begin{theorem}[=Theorem~\ref{sup-an-conj}]\label{supana}

  Suppose that $\vp$ is a psh weight with sup-analytic singularities. Then Demailly's strong continuity holds for $\vp$.

 \end{theorem}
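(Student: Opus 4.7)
The plan is to reduce Conjecture~\ref{dem} for $\vp$ to the case of Green functions associated to graded systems of $\mfm$-primary ideals, which is already settled by Theorem~\ref{gs}. Set $\mfb_m := \JJ(m\vp)$; the isolated singularity hypothesis on $\vp$ together with the subadditivity of multiplier ideals makes $\mfbb$ a graded system of $\mfm$-primary ideals. Since the Demailly approximant $\vp_m$ has analytic singularities described precisely by $\mfb_m$ (with weight $1/m$), Demailly's formula \cite{D09} yields $L_n(\vp_m,0)=e(\mfb_m)/m^n$, so
\begin{equation*}
\lim_{m\to\infty} L_n(\vp_m,0) \;=\; e(\mfbb)
\end{equation*}
by the very definition of the asymptotic Samuel multiplicity. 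Consequently, Conjecture~\ref{dem} for $\vp$ is equivalent to the identity $L_n(\vp,0)=e(\mfbb)$.

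Next, observe that the Green function $G_{\mfbb}$ is v-equivalent to $\vp$, since the asymptotic multiplier ideals of $\mfbb$ coincide with $\mfbb$ itself. Applying Theorem~\ref{gs} to $G_{\mfbb}$ and repeating the previous computation for its own Demailly approximants (which likewise have singularities in $\mfb_m$) gives $L_n(G_{\mfbb},0)=e(\mfbb)$. It therefore remains to verify the Conjecture~\ref{dem2}-type identity
\begin{equation*}
L_n(\vp,0) \;=\; L_n(G_{\mfbb},0)
\end{equation*}
for this v-equivalent pair.

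This last equality is where the sup-analytic hypothesis is essential. Writing $\vp$ (up to greenification) as an increasing limit $\vp_k \nearrow \vp$ of maximal psh weights with analytic singularities in zero-dimensional ideals $\mfa^{(k)}$, each $\vp_k$ is equivalent to the Green function of $\mfa^{(k)}$, so $L_n(\vp_k,0)=e(\mfa^{(k)})$ by Demailly's formula and $(dd^c\vp_k)^n = L_n(\vp_k,0)\,\delta_0$. Bedford--Taylor continuity along increasing sequences of maximal weights, combined with Proposition~\ref{R06} (invariance of $L_n$ under greenification), then yields $L_n(\vp_k,0)\to L_n(\vp,0)$. On the other hand, the strong openness theorem produces $\JJ(m\vp_k)\nearrow\mfb_m$, stabilizing in $k$ for each fixed $m$. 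Applying Theorem~\ref{eaeb} to the graded systems generated by the $\mfa^{(k)}$ and comparing them with $\mfbb$ then identifies $\lim_k e(\mfa^{(k)})$ with $e(\mfbb)$ in the joint limit, giving $L_n(\vp,0)=L_n(G_{\mfbb},0)$ and completing the argument.

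The main obstacle is precisely this final matching step. In the absence of a general v-equivalence principle for residual Monge-Amp\`ere masses (Conjecture~\ref{dem2}), one cannot argue abstractly; the sup-analytic structure is indispensable because it supplies an explicit approximation $L_n(\vp,0)=\lim_k e(\mfa^{(k)})$ by honest Samuel multiplicities. The delicate technical point will be the careful diagonal coordination of the parameters $k$ (governing the $\vp_k$-approximation of $\vp$) and $m$ (governing the multiplier-ideal level), so that the Samuel multiplicities $e(\mfa^{(k)})$ and $e(\mfb_m)/m^n$ can be identified in the joint limit via Theorem~\ref{eaeb}.
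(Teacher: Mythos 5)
Your reduction of Conjecture~\ref{dem} for a sup-analytic $\vp$ to the single identity $L_n(\vp,0) = e(\mfbb)$, where $\mfbb = (\JJ(m\vp))_m$, is correct, and so is the observation that the Demailly approximants carry Samuel multiplicities $e(\mfb_m)/m^n$. The problem is the final step: you invoke a ``diagonal coordination'' of the parameters $k$ and $m$ via Theorem~\ref{eaeb} to identify $\lim_k e(\mfa^{(k)})$ with $e(\mfbb)$, but you never describe a mechanism that does this. That is a genuine gap, not a technicality. Theorem~\ref{eaeb} compares one graded system $\mfab$ with its own asymptotic multiplier ideals; the single ideals $\mfa^{(k)}$ describing the singularities of the $\vp_k$ do not form a graded system, and you have not specified one to which Theorem~\ref{eaeb} could be applied. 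The two approximations, $\vp_k$ increasing from below and the Demailly approximants from above with masses $e(\mfb_m)/m^n$, live on opposite sides of $\vp$; there is no abstract bridge between them, and constructing one is precisely the content of Conjecture~\ref{dem2}.

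The paper closes this gap structurally rather than by a limit-in-$(k,m)$ argument. The decisive ingredient, which your plan does not use, is Proposition~\ref{sup-an}(3): $\vp$ has sup-analytic singularities if and only if $G_{\mfab(G_\vp)} = G_\vp$, where $\mfab(\vp)$ is the graded system of valuation ideals defined by \eqref{gdd}. This is what converts the sup-analytic ``approximation from below'' into a single graded system attached to $\vp$. Then Proposition~\ref{eaeb_gr} (Theorem~\ref{eaeb} plus the Domination Principle) yields $G_{\mfbb} = G_{\mfab(G_\vp)}$, hence $G_{\mfbb} = G_\vp$; since $L_n(G_{\mfbb}) = e(\mfbb)$ by construction of $G_{\mfbb}$ as the decreasing limit of the $H_{m!}$, and $L_n(G_\vp) = L_n(\vp)$ by Proposition~\ref{R06}, one gets $L_n(\vp) = e(\mfbb)$, and Propositions~\ref{max} and~\ref{arb} then finish off the Demailly convergence. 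Without Proposition~\ref{sup-an}(3) you have no access to the graded system on which Theorem~\ref{eaeb} acts.

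Two smaller inaccuracies: (i) subadditivity gives $\JJ((m+l)\vp) \subset \JJ(m\vp)\cdot\JJ(l\vp)$, which is the reverse of the containment needed to make $\mfbb$ a graded system; that $\mfbb$ is one is true, but follows from the structure of asymptotic multiplier ideals, not bare subadditivity. (ii) The convergence $L_n(\vp_k,0)\to L_n(\vp,0)$ along the increasing sup-analytic sequence is not ``Bedford--Taylor continuity''; it is Proposition~\ref{2.2}(2) applied to the greenified, normalized weights.
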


 In particular, this unifies all the currently known positive results (to our knowledge) for Demailly's strong continuity on psh weights since the class also includes psh weights with asymptotically analytic singularities. 

 This paper is organized as follows. In Section 2, we prepare fundamental definitions and properties of psh singularities and psh weights. In Section 3, we discuss Siu and Green   functions associated to a graded system of ideals and prove Theorem~\ref{eaebeq} and Theorem~\ref{gs}. In Section 4, we apply the algebraic result of K\"uronya and Wolfe to our setting and prove Theorem~\ref{kuronya}. In Section 5, we apply Theorem~\ref{eaeb} to study psh weights with sup-analytic singularities and prove Theorem~\ref{supana}. Finally in Section 6 as an appendix, Theorem~\ref{eaeb} is proved by S. Boucksom. 
\\

\noi \textbf{Acknowledgements.}
 We are very grateful to S\'ebastien Boucksom for having interesting and important discussions and   allowing us to include his Theorem~\ref{eaeb} as an appendix in this paper. We also thank Jean-Pierre Demailly for helpful discussions and Philippe Eyssidieux for helpful comments.
D.K. was supported by Basic Science Research Program through NRF Korea funded by the Ministry of Education (2018R1D1A1B07049683).

\section{Plurisubharmonic singularities}
 We refer to \cite[Def. 1.4]{D11}, \cite{Kl91}, \cite{Ki94}, \cite{R00} for the definition of a plurisubharmonic (i.e. psh) function and some of its  basic properties. We also refer to \cite[\S 2]{R13} for some more information regarding   this section.
We first have some \textbf{{conventions and notations}} used in this paper.

\begin{itemize}

\item Since our main interest is on the singularity of a psh function at a point, often it is enough to consider a germ of a psh function (at the origin $0 \in \CC^n$). 

\item When we need to specify a domain of a psh germ at $0$ (for example, when we define a Green function), we  will take  a  bounded hyperconvex domain $D \subset \CC^n$, i.e. a  bounded domain with a negative psh exhaustion function (cf. \cite[p.80]{Kl91}, \cite{C04}), cf. Remark~\ref{weak}.

   \item
 Let $D \subset \CC^n$ be a domain. 
 We denote by $\PSH(D)$ and  $\PSH^-(D)$ the set of psh functions on $D$ and the set of negative (i.e. $\le 0$) psh functions on $D$, respectively.  We denote by $\PSH_p$ the collection of germs of psh functions at a point $p \in D$. 
 
  \item
  
  A \emph{psh weight} at $0$ is a germ of a psh function that is  locally bounded outside $0$. 
  The collection of psh weights (resp. maximal psh weights) at $0$ is denoted by $W_0$ (resp. by $MW_0$). See \S 2.2 and \S 2.3. 

\end{itemize}

\medskip

\subsection{Plurisubharmonic singularities}

Following \cite{D11},  we will say that two psh functions $u$ and $v$ have \emph{equivalent singularities} (and write $u \sim v$) if $u = v + O(1)$. Here $O(1)$ refers to a function which is locally bounded near every point (cf. \cite[Def. 2]{D13}).   
We will say $u$ is \emph{singular} at a point $p \in D$ if $u$ is not locally bounded at $p$ (i.e. $u \neq 0 + O(1)$). In particular, $u$ is singular at $p$ if   $u(p) = -\infty$.

Some of the most important invariants of psh singularities are Lelong numbers and multiplier ideals (cf. \cite{D11}, \cite{Ki94}). The former provides a notion of `multiplicity' while the latter is concerned with integrability : the multiplier ideal $\JJ(u)$ of a psh function $u$ on a complex manifold is the ideal sheaf of holomorphic functions germs $f$ such that $\abs{f}^2 e^{-2u}$ is locally integrable. 

 These two notions of different nature are related via the following useful weakening of `equivalent singularities'.

 \begin{definition}\label{vequiv}
 
Let us define two psh functions $\vp$ and $\psi$ on a complex manifold $X$ to be valuatively equivalent, or \textbf{v-equivalent} if $v(\vp) = v(\psi)$ for every divisorial valuation $v$ on $X$ (cf. \cite{BFJ}). 
\end{definition}

Here, a divisorial valuation refer to the generic Lelong number along a smooth irreducible hypersurface $E \subset X'$ of the pullback of psh functions $\mu^* \vp, \mu^* \psi$ where $\mu: X' \to X$ is a modification. See \cite[B.5]{BBJ} for more information.  

 If $\vp = \psi + O(1)$, then they are v-equivalent. However, the converse is not true : see Proposition~\ref{siusiu} (see also \cite[(2.3), (2.9)]{KS19}). Thanks to \cite[Thm. A]{BFJ} combined with the strong openness theorem \cite{GZ}, $\vp$ and $\psi$ are v-equivalent if and only if their multiplier ideals are equal : $\JJ(c\vp) = \JJ(c\psi)$ for every real $c > 0$.

 We recall the following subclasses of psh singularities which will be used in this paper.

\begin{definition} \label{5psh}

 Let $u$ be a psh function on a complex manifold $X$. 
  We define the following conditions on $u$ :

\begin{enumerate}[I.]

\item (analytic singularities) cf. \cite{D11} We say $u$ has analytic singularities if it can be written locally as $u = c \log (\sum^N_{j =1} \abs{f_j}^{}) + O(1)$ for a real number $c \ge 0$ and local holomorphic functions $f_j$. If the functions come from a coherent ideal sheaf  $\mfa$ on $X$, i.e. $\mfa$  is  locally generated by $f_1, \ldots, f_N$,  we will use the notation 

\begin{equation*}\label{nota}
 u = c \log \abs{\mfa} + O(1)
\end{equation*}
 and say that $u$ has analytic singularities of type $\mfa^c$, cf. \cite[p.641]{BBJ}. 
\\

\item (H\"older psh)  \cite[Def. 2.3]{DK} We say $u$ is H\"older psh (or locally exponentially H\"older continuous)  if we have  $ \abs{e^{u(x)} - e^{u(y)}} \le  C d(x,y)^{\alpha} $ for   $\forall x, y \in K$ for every compact $K \subset X$ where  $C = C_K \ge 0$ and $\al = \al_K > 0$. Here $d$ is a Riemannian metric on $X$. 
\\

In order to describe the following items, we will consider germs of psh functions.  Take a neighborhood of a point in $X$ which we may identify with $D \subset \CC^n$, a bounded pseudoconvex domain containing $0 \in \CC^n$.    Let $(u_m)_{m \ge1}$ be the Demailly approximation of $u$ defined by $$ u_m (z) =  \frac{1}{m} \sup \{ \log \abs{f(z)} : f \in \OO(D), \int_D \abs{f}^2 e^{-2mu} dV <1 \} .$$

\noi See  \cite{D92}, \cite[Thm. 13.2]{D11} (also \cite[\S 2.4]{R13}) for basic properties of the Demailly approximation.  In particular, $u_m$ has analytic singularities of type $\JJ(mu)^{\frac{1}{m}}$. 
\\

\item (tame) \cite{BFJ} We say $u \in \PSH_0$ is tame if for every $m \ge 1$, we have $$u \le u_m + O(1) \le (1 - \frac{C}{m}) u + O(1)$$ for a constant $C > 0$.  (Note that the first inequality always holds as a basic property of the Demailly approximation.) 
\\

\item (asymptotically analytic) \cite{R10}, \cite{R13} We say $u \in \PSH_0$ has asymptotically analytic singularities if for every $\ep >0$, there exists a psh function $u_\ep$ with analytic singularities such that

$$ (1+\ep) u_\ep + O(1) \le u \le (1-\ep) u_\ep + O(1) .$$
\\

\item (bounded below by log)  We say $u \in \PSH_0$ is bounded below by log if  $u \ge v + O(1)$ for some $v$, a psh function with analytic singularities. In fact, in this paper, we will use this terminology only when $v$ has isolated singularities, in which case we can take $v = c \log \abs{z}$ for $c \ge 0$ where $\abs{z} = \abs{z_1} + \ldots + \abs{z_n}$ by Proposition~\ref{bbb}.

\end{enumerate}

\end{definition}

We have the implications I $\to$ II $\to$ III $\to$ IV $\to$ V :  namely, I $\to$ II from \cite{DK}, II $\to$ III due to \cite[Lem. 5.10]{BFJ}, III $\to$ IV clear from the definition, IV $\to$ V due to Proposition~\ref{bbb} below.

On the other hand, Example~\ref{KW1} does not satisfy III.  The following example satisfies II but not I in general when $\alpha_{j,k,l} > 0$ are not rational as in Example~\ref{alpha}.

\begin{example}\cite[(2.4)]{DK}
Let  $u = \max_{j} \log \left( \sum_k \prod_l \abs{f_{j,k,l}}^{\alpha_{j,k,l}} \right)$ where $f_{j,k,l}$ are holomorphic functions and $\alpha_{j,k,l} \in \RR_{>0}$ with the sets of indices $j,k,l$ being finite. Then $u$ is a psh function with $e^u$ locally H\"older continuous.

\end{example}

\medskip

\subsection{Plurisubharmonic weights}

 Let us now consider psh functions $u$  with  isolated singularities at $0 \in \CC^n$, i.e. psh functions which are  locally bounded outside $0 \in \CC^n$. We often simply call such a germ $u \in \PSH_0$ as a {\it psh weight} at $0$ (or just a {\it weight} at $0$). \footnote{Compare with \cite{BFJ} where $u$ is called a weight if, in addition, $e^u$ is continuous.}

  The collection of psh weights at $0$ will be denoted by $W_0$.   When not specified, a psh weight is always assumed as a psh weight at $0 \in \CC^n$. 
  
 For a psh function $u$, the \emph{complex Monge-Ampère operator} means the operator $u \mapsto (dd^c u)^n$ (whenever this is defined). For locally bounded $u$, it was shown by  \cite{BT76} that $(dd^c u)^k$ can be defined inductively as $(dd^c u)^k = dd^c [u (dd^c u)^{k-1}]$. Moreover, $(dd^c u)^n$ is well defined when $u$ is allowed some singularities, in particular when $u$ has isolated singularities at points (due to work of \cite{D93}, \cite{Si85}) : see e.g. \cite[Prop. 2.3]{D11}, \cite[\S 3.1]{R00}. 
 (Also see \cite{C04}, \cite{B06} for greater generality where $(dd^c )^n$ is defined. ) Hence the following definition makes sense. 
  
   \begin{definition}
 We will call the residual Monge-Ampère mass of a psh weight $u$ at $0$, $(dd^c u)^n (\{ 0 \})$  as the $n$-th Lelong number $L_n (u, 0)$.

 \end{definition} 
  
 See \cite{D93}, \cite{DH}, \cite{R11}, \cite{R13}, \cite{KR18} for more information on this and other higher Lelong numbers of psh weights.

 On the other hand, it is worth mentioning the following fact. 

\begin{proposition}\label{bbb}

Let $u$ be a psh weight at $0$, i.e. $u \in W_0$. We have $u$ bounded below by some psh weight $v$ with analytic singularities if and only if $u$ is bounded below by log, i.e. $u \ge c \log \abs{z} + O(1)$ for some $c > 0$.

\end{proposition}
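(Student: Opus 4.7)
The plan is to prove both implications, of which only one has any real content. For the backward direction (bounded below by log implies bounded below by a psh weight with analytic singularities), the function $v(z) = c\log|z|$ itself qualifies: it is a psh weight at $0$ with analytic singularities described by $\mathfrak{m}^c$, where $\mathfrak{m}$ is the maximal ideal at the origin. So the substantive content is the forward direction.

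For the forward direction, I would assume $u \ge v + O(1)$ where $v$ is a psh weight at $0$ with analytic singularities, and write $v = c \log |\mathfrak{a}| + O(1)$ for some coherent ideal $\mathfrak{a}$ and rational $c \ge 0$ as in Definition~\ref{5psh}.I. The first step is to observe that since $v$ is a psh weight, i.e.\ locally bounded outside $0$, the zero locus $V(\mathfrak{a})$ is contained in $\{0\}$, so $\mathfrak{a}$ is either the unit ideal or $\mathfrak{m}$-primary. (The case $c=0$ or $\mathfrak{a} = \mathcal{O}_{\mathbb{C}^n,0}$ is trivial: then $v$ is bounded near $0$, hence $u$ is bounded below near $0$, and any $c' \log|z|$ for $c' > 0$ gives a lower bound.)

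The second step applies Rückert's Nullstellensatz in the local analytic ring $\mathcal{O}_{\mathbb{C}^n,0}$ to conclude $\mathfrak{m}^k \subset \mathfrak{a}$ for some integer $k \ge 1$. In particular each coordinate power $z_i^k$ belongs to $\mathfrak{a}$, so writing $z_i^k = \sum_j g_{ij} f_j$ with $f_1, \dots, f_N$ generators of $\mathfrak{a}$ gives $|z_i|^k \le C \sum_j |f_j|$ on a neighborhood of $0$, and hence $|z|^k \le C' \,|\mathfrak{a}|$. Taking logarithms and multiplying by $c$ yields
\[
v \;=\; c \log|\mathfrak{a}| + O(1) \;\ge\; c k \log|z| + O(1),
\]
so $u \ge c k \log|z| + O(1)$, which is the desired log lower bound.

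There is no real obstacle here; the proof is a one-step reduction to the Nullstellensatz for the $\mathfrak{m}$-primary ideal describing the analytic singularities of $v$. The only thing to be careful about is the degenerate case $c=0$ (where $v$ contributes no singularity), which is handled separately by the triviality that $\log|z|\to-\infty$ at the origin makes the bound vacuous for any bounded $u$.
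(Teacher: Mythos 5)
Your proof is correct and takes essentially the same route as the paper: reduce to the nontrivial direction, note that the ideal describing $v$'s analytic singularities is $\mathfrak m$-primary, and apply the Nullstellensatz in the analytic local ring to get $\mathfrak m^k \subset \mathfrak a$, hence $v \ge ck\log|z| + O(1)$. You spell out the comparison $|z|^k \lesssim |\mathfrak a|$ and handle the degenerate case $c=0$ more explicitly than the paper, but the key idea is identical; the only nit is that you call $c$ rational, whereas Definition~\ref{5psh}.I allows real $c\ge 0$ (it is the exponents $a_j$ that are required rational), though this does not affect the argument.
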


\begin{proof}

Even though $u$ and $v$ are given as germs, the relation of `bounded below' is well-defined as observed in Lemma~\ref{trivial}. 
 It suffices to show one direction. Suppose that $ u \ge v + O(1)$ where $v$ is a psh weight with analytic singularities of type ${\mfa}^c$, i.e. $v = c \log \abs{\mfa} + O(1)$ for $\mfa$, a $\mfm$-primary ideal. Here we may view $\mfa$ and $\mfm$ as ideals in the analytic local ring $\OO_{\CC^n, 0}$ at $0$ since we are considering germs of holomorphic and psh functions.  Now there exists $k \ge 1$ such that ${\mfm}^k \subset \mfa$ by Nullstellensatz for the analytic local ring~\cite{GR65}. It follows that $v \ge c k \log \abs{z} + O(1)$. 
\end{proof}

\begin{lemma}\label{trivial}

Let $u$ be a psh weight at $0$.  Let $v$ be a germ of a psh function at $0$. Let $D_1 \subset \CC^n$ and $D_2 \subset \CC^n$ be open neighborhoods of $0 \in \CC^n$. Suppose that $u$ and $v$ are defined on $D_1 \cup D_2$. Then we have $v \le u + O(1)$ on $D_1$ if and only if $v \le u + O(1)$ on $D_2$.

\end{lemma}

\begin{proof}

 Note that $D_1 \cap D_2 \neq \emptyset$. The lemma is trivial since $u$ is locally bounded outside $0$. 
\end{proof}

 The following is a psh weight that does not satisfy V:

\begin{example}[J.-P. Demailly]\label{Demailly}

Consider a psh (in fact subharmonic) function $u$ on $\CC$ given  by $$u(z) = \sum^{\infty}_{k=1} u_k (z) :=   \sum^{\infty}_{k=1} 2^{-k} \log \left( \abs{z - e^{-2^k}}^2 + a_k \right) $$  where we assume that $0<a_k<1$ and the sequence $a_k$ decreases to $0$ as $k \to \infty$. The function $u(z)$ is locally bounded outside $0$ because for any $z\neq 0$, there are only finitely many $k \ge 1$ with $e^{-2^k} > \frac{1}{2} \abs{z}$ : hence all the other terms are greater than $2^{-k} \log(\frac{1}{2} \abs{z})$.

Now suppose that $u(z) \ge C\log \abs{z} + O(1)$ for some $C > 0$. For any $k_0$, we have

$$ O(1) -C 2^{k_0} \le u(e^{-2^{k_0}}) = 2^{-k_0} \log a_{k_0} + \sum_{k \neq k_0} u_k (e^{-2^{k_0}})    < 2^{-k_0} \log a_{k_0} + \log 2 $$


\noi which fails to be true if  $2^{-2k} \log a_k$ tends to $-\infty$ as $k \to \infty$.

 \end{example}

\begin{remark1}

 A psh weight bounded below by log was called as having finite Lojasiewicz exponent in \cite{R13}, \cite{R16}. It is known that a toric (i.e. multicircled) psh weight is bounded below by log (cf. \cite[p.105]{R16}, \cite{R01}).

\end{remark1}

\begin{remark1}
 For psh functions $u$ and $v$, it is customary to refer to the relation $u \ge v + O(1)$ as $u$ being `less singular' than $v$ following \cite{D11}. While this is very useful terminology, one should keep in mind that when $u \ge C \log \abs{z} + O(1)$, it will be  typically the case that the singularities of $u$ are still extremely complicated and not at all `simpler' in any way than those of $C \log \abs{z}$. In particular, $u$ need not have analytic singularities. 
\end{remark1}

\begin{remark1}

 In the algebraic aspect of the subject matter of this paper,  algebraic local rings provide an alternative setting to consider $\mfab$ and $\mfbb$ as in \cite{ELS03}, \cite{M02}. For some standard relations between algebraic and analytic local rings at a point of a complex algebraic variety, we refer to  \cite{S55}. 

\end{remark1}

\medskip

\subsection{Maximal plurisubharmonic weights}

 We will say that a psh weight  $u$ at $0 \in \CC^n$ (i.e. $u \in W_0$)  is \textbf{maximal} if it satisfies $(dd^c u)^n = 0$ outside $0$. In other words, this is when the Monge-Amp\`ere measure $(dd^c u)^n$ is equal to $\ld \delta_0$ for some $\ld \ge 0$ where $\delta_0$ is the Dirac measure at $0$.     \footnote{See \cite[Preface and \S 3.1]{Kl91}, \cite{BT76}, \cite{R98} for more on maximality of psh functions and why this condition is called maximal.}

 The collection of maximal psh weights at $0$ will be denoted by $MW_0$.   
  Example~\ref{Demailly} provides an example of a non-maximal psh weight.

   To illustrate the importance of maximality, we recall the following crucial result from \cite{R13} which was used in many results of \cite{R13} that are quoted and applied in this paper.

\begin{proposition}\cite[Lem. 2.2]{R13}\label{2.2}
 Let $D \subset \CC^n$ be a bounded hyperconvex domain containing $0$. Let $\vp, \vp_j  \; (j \ge 1) \in \PSH^{-} (D)$ be maximal psh weights that are equal to $0$ on $\partial D$.

 \begin{enumerate}
 \item

  Suppose that $\vp_j \ge \vp$ for every $j \ge 1$ and that the sequence $(\vp_j)$ decreases to a psh weight $\psi$. Then $\psi = \vp$ if and only if $(dd^c \vp_j)^n (\{ 0 \}) \to (dd^c \vp)^n (\{ 0 \})$.

 \item
 Suppose that $\vp_j \le \vp$ for every $j \ge 1$ and that the sequence $(\vp_j)$ increases to a function $\eta$. Then $\eta^* = \vp$ if and only if $(dd^c \vp_j)^n (\{ 0 \}) \to (dd^c \vp)^n (\{ 0 \})$

 \end{enumerate}

 \noi where $\eta^*$ is the upper semicontinuous regularization of $\eta$.

\end{proposition}

 The following Domination Principle plays a key role in the proof of Proposition~\ref{2.2}. We will also use it later in this paper.

 \begin{lemma}\label{DP} \cite[Lem. 2.1]{R13}, cf.  \cite[Lem. 6.3]{R06},  \cite{ACCH}. 
 
 Let $D \subset \CC^n$ be a bounded hyperconvex domain containing $0$. Let $u$ and $v$ be two maximal psh weights on $D$, equal to zero on $\partial D$. Suppose that $(dd^c u)^n (0) = (dd^c v)^n (0)$. If $u \ge v$ on $D$, then $u \equiv v$.

 \end{lemma}

 Now using the notion of maximal psh weights, we recall another invariant of  psh singularities from \cite{R06}. Let $\vp$ be a maximal psh weight in $MW_0$. The \emph{relative type} of $u$ with respect to $\vp$ is defined by  

$$\sigma (u, \vp) = \liminf_{z \to 0} \frac{u(z)}{\vp(z)} .$$

\noi This generalizes  the usual Lelong number, cf. \cite[\S 3]{R06}. It is also known to have the characterization $\sigma (u, \vp) = \sup \{ c> 0 : u \le c \vp + O(1) \}$, cf. \cite[p.1222]{R13}, \cite{BFJ}.  This notion of relative type will be used in the next subsection.

\medskip

\subsection{Green function of a plurisubharmonic weight}

 Now we recall the notion of Green functions from \cite[\S 2.3]{R13}. Let $D \subset \CC^n$ be a bounded hyperconvex domain containing $0 \in \CC^n$. Let $u \in W_0$ be a psh weight defined on $D$ (i.e. having isolated singularities at $0$). 
 
 \begin{definition}\label{Gu}
 Define the Green function $G_u  (= G_{u,D})$ of $u$ by

 \begin{equation}\label{greeni}
  G_u (z) = G_{u, D} (z) = \limsup_{y \to z} \left( g(y) :=  \sup_h \{ h(y) : h \in \PSH^{-} (D), h \le u + O(1) \} \right)
 \end{equation}
\end{definition}

\noi following \cite[\S 5.2]{R06}, \cite[p.1222]{R13} (where this was called the complete greenification). 
There it was shown that $G_u$ is maximal on $D \setminus \{ 0 \}$ and that $G_u$ is equal to zero on $\partial D$, i.e. $G_u (z) \to 0$ as $z \to \partial D$, cf. \cite[Prop. 5.6]{R06}.

\begin{remark1}\label{weak}
We remark that even though $ G_{u, D}$ does depend on the choice of $D$, dependence on a particular choice of $D$ is rather weak and not important for the purpose of studying psh singularities since $G_{u, D_1}$ and $G_{u, D_2}$ will differ by $O(1)$ on $D_1 \cap D_2 \ni 0$.
\end{remark1}

 We note the following basic properties of Green functions.

\begin{prp}\label{basic}

Let $u$ and $v$ be psh weights on a bounded hyperconvex domain $D \subset \CC^n$.

\begin{enumerate}

\item We have $u \le G_u + O(1)$.

\item  If $u \le v + O(1)$, then $G_u \le G_v $.   If $u = v + O(1)$, then $G_u = G_v$.

\item
 For $c \in \RR_{>0}$, we have $G_{cu} = c G_u$.

\item

 If $u$ has analytic singularities, then $u = G_u + O(1)$.

\end{enumerate}

\end{prp}

\begin{proof}

 Only (4) is not immediate from the definition of the Green function. This is already known from \cite[Prop. 5.1]{RS} but we present an alternative argument here.  From \cite[Thm. 4.3]{K14}, we know that if psh functions $u$ and $v$ are v-equivalent to each other and $u$ has analytic singularities, then $u \ge v + O(1)$. We can apply this for $v = G_u$ due to Proposition~\ref{vgreen}. Combining with (1), we obtain (4).
\end{proof}

In the following propositions, we recall that some important invariants of psh singularities are preserved when one takes the Green function of a psh weight.

\begin{proposition}\label{R06}

Let $u \in W_0$ be a psh weight and $G_u$ its Green function on a bounded hyperconvex domain $0 \in D \subset \CC^n$. 

(1)  We have $L_n (u, 0) = L_n (G_u, 0)$.
 
 (2) The relative types $\sigma (u, \vp) = \sigma (G_u, \vp)$  with respect to all maximal weights $\vp$. 
  
 \end{proposition}

 This result was given in \cite{R06}. 
(1)  is  due to \cite[Prop. 5.6]{R06}. (See also  \cite[Prop. 2.1]{R11} for generalization to other higher Lelong numbers.) 
(2)  is due to  \cite[Prop. 5.5]{R06} (cf. \cite[p.1222]{R13}).

\begin{proposition}\label{vgreen}

Let $u \in W_0$ be a psh weight and $G_u$ its Green function on a bounded hyperconvex domain $0 \in D \subset \CC^n$. Then $u$ and  $G_u$ are v-equivalent, i.e. the multiplier ideal sheaves are equal $\JJ(mu) = \JJ(m G_u)$ for all real $m > 0$.

\end{proposition}

\begin{proof}

As in \cite{BFJ}, \cite{GZ}, first define  the `limit multiplier ideal' $\JJ_+ (u)$ to be $\cup_{\ep >0} \JJ((1+\ep)u))$ which is a coherent ideal sheaf due to strong Noetherian property.

From Proposition~\ref{R06},  we have  $\sigma (u, \vp) = \sigma (G_u, \vp)$  with respect to all maximal psh  weights $\vp \in MW_0$.  This implies that, thanks to  \cite[Thm. A]{BFJ},  we have $\JJ_+ (mu) = \JJ_+ (m G_u)$ for every real $m > 0$. 
By the strong openness theorem \cite{GZ}, we have $\JJ (v)= \JJ_+ (v)$ for every psh $v$, hence now we get $\JJ (mu) = \JJ (m G_u)$. 
 \end{proof}

\medskip

\section{Plurisubharmonic functions associated\\ to a graded system of ideals}\label{SG}

  In this section, we will introduce and use two kinds of psh functions one can naturally associate to a given graded system of ideals.

\subsection{Siu functions of a graded system of ideals}

 Let $X$ be a connected complex manifold. A \emph{graded system of ideals} in $\OO_X$  is a sequence of coherent ideal sheaves  $\mfab = (\mfa_m)_{m \ge 1}$ satisfying $\mfa_m \cdot \mfa_k \subset \mfa_{m+k}$ for all $m, k \ge 1$. First we recall the following

 \begin{definition}\label{am}\cite[(11.1.15)]{L} cf. \cite{ELS01}
 The asymptotic multiplier ideal sheaf of $\mfab$ with real  coefficient $c > 0 $ (denoted by $\JJ(c \cdot \mfab)$ or by $\mfb_c$) is defined as the unique maximal member in the family of ideal sheaves $\{ \JJ (\frac{c}{q} \cdot \mfa_q) \}_{q \ge 1}$.
 \end{definition}

 Here the algebraic multiplier ideal $\JJ (\frac{c}{q} \cdot \mfa_q) $ can be understood as the multiplier ideal of a psh function, namely as  $\JJ(\frac{c}{q} \log \abs{\mfa_q})$. We will  see that the asymptotic multiplier ideal can be also understood as the multiplier ideal of a psh function. 
 
 Given $\mfab$, we define a Siu function associated to $\mfab$ following \cite{S98} \footnote{See also \cite{BEGZ}, \cite{K14} for more information.} by

 \begin{equation}\label{siu}
  \vp = \vp_{\mfab} = \log \left( \sum^{\infty}_{k \ge 1}  \ep_k  \abs{\mfa_k}^{\frac{1}{k}} \right)
 \end{equation}

\noi on a domain $D \subset X$ where every graded piece $\mfa_k$ is an ideal with a choice of a finite number of generators, say $g_1^{(k)}, \ldots, g_{m_k}^{(k)}$. 
Here  we used  the notation $ \abs{\mfa_k} := \abs{g_1^{(k)}} + \ldots + \abs{g_{m_k}^{(k)}}$ as in Definition~\ref{5psh} (I),   with the convention that whenever the notation  $ \abs{\mfa_k}$ is used, a specific choice of a finite number of generators is implicitly assumed.
Also $ \ep_k $'s are a choice of positive coefficients such that the series $\sum \ep_k$ converges.

Given $\mfab$, one can always take a domain  $D$ for $\vp_{\mfab}$ as a relatively compact Stein domain due to coherence of each $\mfa_k$. Namely, $\mfa_k$ is generated by $\mfa_k (D)$ due to Cartan's Theorem A~\cite[p.243]{GR65}  and then the strong Noetherian property of coherent sheaves~\cite[II (3.22)]{DX} applies.  For the purpose of this paper, bounded hyperconvexity of a domain in $\CC^n$ is enough to assume when defining both Siu and Green functions, thanks to the following well-known fact.

\begin{lemma}\label{hyper}

 If $D \subset \CC^n$ is a bounded hyperconvex domain, then it is Stein. 

\end{lemma}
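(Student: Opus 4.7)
The plan is to upgrade the negative psh exhaustion function provided by hyperconvexity to a psh exhaustion with values tending to $+\infty$ at $\partial D$, and then to invoke the classical solution of the Levi problem in $\CC^n$: any pseudoconvex domain in $\CC^n$ is Stein.

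By the assumption of hyperconvexity, there is a psh function $\rho : D \to (-\infty, 0)$ such that $\{\rho < c\}$ is relatively compact in $D$ for every $c < 0$. Set $\tilde\rho := -\log(-\rho)$. The one-variable function $t \mapsto -\log(-t)$ has derivative $-1/t > 0$ and second derivative $1/t^2 > 0$ on $(-\infty, 0)$, so it is smooth, convex, and strictly increasing there. Hence the composition $\tilde\rho$ is psh on $D$. As $z$ approaches $\partial D$, $\rho(z) \to 0^-$ (for otherwise a sequence would accumulate in some sublevel set $\{\rho < c\}$, which is relatively compact in $D$), so $\tilde\rho(z) \to +\infty$ at $\partial D$. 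Moreover $\{\tilde\rho < c\} = \{\rho < -e^{-c}\}$ is relatively compact in $D$ by hyperconvexity, so $\tilde\rho$ is a psh exhaustion function in the standard sense.

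Thus $D$ is pseudoconvex; if one wishes a smooth strictly psh exhaustion (which some formulations of Levi's problem require), a routine local convolution smoothing of $\tilde\rho$ followed by the addition of $\varepsilon |z|^2$ for small $\varepsilon > 0$ produces one, using that $D$ is bounded. The Oka--Norguet--Bremermann theorem then yields that $D$ is Stein. The only substantive step is the transformation $\rho \mapsto -\log(-\rho)$; after that, the conclusion reduces to a classical reference and there is no real obstacle.
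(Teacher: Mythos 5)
Your proof is correct and follows essentially the same route as the paper: show that hyperconvexity gives pseudoconvexity (the paper cites \cite[I (7.2)]{DX} where you instead explicitly construct the exhaustion $-\log(-\rho)$), and then invoke the solution to the Levi problem to conclude Steinness (the paper cites \cite[VIII (9.11)]{DX}).
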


\begin{proof}

 By definition, $D$ admits a negative psh exhaustion function which implies that $D$ is pseudoconvex \cite[I (7.2)]{DX} and thus Stein by \cite[VIII (9.11)]{DX}. 
\end{proof} 

\begin{remark1}
 The converse of this statement is known under some conditions: for example, a bounded pseudoconvex domain $D \subset \CC^n$ with Lipschitz boundary is hyperconvex due to \cite[(0.2)]{D87}. 
\end{remark1}

 We recall the following important fact due to S. Boucksom which directly relates a Siu function associated to $\mfab$ to   the asymptotic multiplier ideals (Definition~\ref{am}).

\begin{theorem}[S. Boucksom]\cite[Thm. 2.2]{KS19}\label{ksb}
 Let $\mfab$ and $\vp = \vp_{\mfab}$ be as above. For every real $c > 0$, we have  $\JJ(c\vp) = \JJ  (c \cdot \mfab)$.

\end{theorem}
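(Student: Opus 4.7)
I would prove the two inclusions $\JJ(c\vp) \supseteq \JJ(c \cdot \mfa_\bullet)$ and $\JJ(c\vp) \subseteq \JJ(c \cdot \mfa_\bullet)$ separately, with the first being elementary and the second requiring the valuative characterization of multiplier ideals.

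For the easy inclusion $\JJ(c \cdot \mfa_\bullet) \subseteq \JJ(c\vp)$, I would just keep the $q$-th term of the Siu series: for every $q \ge 1$,
$$e^{\vp} = \sum_{k \ge 1} \varep_k |\mfa_k|^{1/k} \;\ge\; \varep_q |\mfa_q|^{1/q},$$
so $c\vp \ge \tfrac{c}{q}\log|\mfa_q| + O(1)$ locally, giving $\JJ(\tfrac{c}{q}\mfa_q) \subseteq \JJ(c\vp)$. Taking the union over $q$ (which stabilizes by Noetherianity and coincides with $\JJ(c\cdot\mfa_\bullet)$ by Definition~\ref{am}) yields the inclusion.

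For the hard inclusion, the heart of the argument is the computation that $v(\vp) = v(\mfa_\bullet)$ for every divisorial valuation $v$ centered at $0$, where $v(\mfa_\bullet) := \lim_{k\to\infty} \tfrac{v(\mfa_k)}{k} = \inf_k \tfrac{v(\mfa_k)}{k}$; the limit exists by Fekete's lemma applied to the subadditive sequence $k \mapsto v(\mfa_k)$ coming from $\mfa_m \cdot \mfa_k \subseteq \mfa_{m+k}$. To verify this, pass to a log resolution $\pi : Y \to \CC^n$ on which $v = \ord_E$ for a prime exceptional divisor $E$, pick a local generator $z_E$ of $E$, and write $\pi^*|\mfa_k| \sim |z_E|^{v(\mfa_k)} |g_k|$ with $g_k$ nonvanishing at a general point of $E$. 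Then
$$e^{\pi^*\vp} \;=\; \sum_{k \ge 1} \varep_k\, |z_E|^{v(\mfa_k)/k}\, |g_k|^{1/k}.$$
The exponent $v(\mfa_k)/k$ decreases to the infimum $v(\mfa_\bullet)$ along some subsequence, and factoring out $|z_E|^{v(\mfa_\bullet)}$ shows that the remaining series is bounded above and bounded below away from zero generically on $E$, giving $v(\vp)=v(\mfa_\bullet)$. Care must be taken when the infimum is not achieved: one shows $v(\vp)\le v(\mfa_\bullet)+\varep$ from one near-minimizing term and $v(\vp)\ge v(\mfa_\bullet)$ from every term, then lets $\varep \to 0$.

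Finally, I would invoke the valuative description of multiplier ideals: for any psh function $u$ with analytic singularities and, more generally (using Guan--Zhou strong openness together with Demailly approximation as in \cite{BFJ}), for arbitrary psh $u$,
$$\JJ(c u) = \{\, f \in \OO_{\CC^n,0} : v(f) + A(v) > c\,v(u) \text{ for every divisorial } v \text{ centered at } 0 \,\},$$
with the same formula for $\JJ(c \cdot \mfa_\bullet)$ (replacing $v(u)$ by $v(\mfa_\bullet)$; see \cite{BFJ,FJ05}). Since the valuative data $v(\vp)$ and $v(\mfa_\bullet)$ agree for every $v$, the two multiplier ideals coincide. The main obstacle is this last step, namely making the valuative characterization available for the transcendental psh function $\vp$ rather than only for ideals; this is exactly where the combined input of strong openness and the BFJ/Jonsson--Mustata valuative framework enters, and it is the reason the proof goes beyond the purely algebraic asymptotic multiplier ideal construction.
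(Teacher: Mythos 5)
The paper does not prove this statement; it quotes it from \cite[Thm.~2.2]{KS19} (Boucksom's appendix to that paper), so there is no internal proof to compare against. Your decomposition into two inclusions is the right structure, and the easy direction $\JJ(c\cdot\mfab)\subseteq\JJ(c\vp)$ is fine: keeping one term gives $c\vp\ge\frac{c}{q}\log|\mfa_q|+O(1)$, hence $\JJ(\frac{c}{q}\mfa_q)\subseteq\JJ(c\vp)$, and the family $\{\JJ(\frac{c}{q}\mfa_q)\}_q$ is directed with maximum $\JJ(c\cdot\mfab)$.

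The hard inclusion, however, contains a genuine gap. Reducing it to the valuative identity $v(\vp)=v(\mfab)$ for every divisorial $v$ is legitimate granting BFJ+Guan--Zhou, and $v(\vp)\le v(\mfab)$ does follow term-by-term as you say. But for $v(\vp)\ge v(\mfab)$ you assert that after factoring $|z_E|^{v(\mfab)}$ out of $\pi^*e^\vp$ the remaining series $\sum_k\ep_k|z_E|^{v(\mfa_k)/k-v(\mfab)}|g_k|^{1/k}$ is bounded above near a generic point of $E$. This is exactly where the difficulty sits and is not justified: each term is \emph{divided} by $|z_E|^{v(\mfa_k)/k}\le1$ relative to the convergent pulled-back series $\sum_k\ep_k\pi^*|\mfa_k|^{1/k}$, so the new series is term-by-term larger and its convergence near $E$ is not a consequence of how the $\ep_k$ were chosen on $D$; likewise $\sup_U|g_k|^{1/k}$ over a neighbourhood $U$ of a generic point of $E$ is not controlled by the data on $D$, since such $U$ meets $E$ and the maximum principle gives no uniform-in-$k$ bound. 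As $v(\vp)\ge v(\mfab)$ for all divisorial $v$ is essentially equivalent to the theorem itself, this step cannot be waved through. The argument in \cite{KS19} (and the way the present paper uses the same tool in its final proposition) avoids the infinite series altogether by applying strong effective openness for increasing sequences (Guan--Zhou, see \cite{Hi}) to the finite partial sums $\vp^{(N)}=\log\sum_{k\le N}\ep_k|\mfa_k|^{1/k}$: these have analytic singularities and increase to $\vp$, so $\JJ(c\vp)=\JJ(c\vp^{(N_0)})$ for some $N_0\gg0$, while the graded-system inclusion $\mfa_k^{N!/k}\subseteq\mfa_{N!}$ for $k\le N$ gives $\vp^{(N)}\le\frac{1}{N!}\log|\mfa_{N!}|+O(1)$, hence $\JJ(c\vp^{(N)})\subseteq\JJ(\frac{c}{N!}\mfa_{N!})\subseteq\JJ(c\cdot\mfab)$. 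I would replace the valuation computation with this.
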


 In particular, thanks to this result, the formal notation for the asymptotic multiplier ideals $ \JJ  (c \cdot \mfab)$ (as in \cite[Ch.11]{L}) using the coefficient $c$ can be naturally understood as the multiplier ideal $\JJ(c\vp)$ associated to the psh function $c \vp$.


\medskip

\subsection{Green function of a graded system of ideals}

Now we will recall from \cite{R13} the notion of the Green function associated to a graded system of ideals $\mfab$. Let $\mfab$ be a graded system of ideal sheaves on a complex manifold $X$. Let $D \subset X$ be a domain which we can view as a bounded hyperconvex neighborhood of $0 \in \CC^n$. 

First, for an individual ideal $\mfa$, we will define $G_\mfa$, the Green function with singularities along $\mfa$ in the sense of Rashkovskii-Sigurdsson~\cite{RS}.

 Let $\mathcal F_{\mfa}$ be the class of psh functions $u \in \PSH^{-} (D)$ satisfying $u \le \log \abs{ \mfa} + O(1)$ on $D$. 
We define $G_\mfa (z) = \sup \{ u(z) : u \in \mathcal F_{\mfa} \}$ for $z \in D$. From \cite[Thm. 2.5]{RS}, $G_\mfa$ is psh.  When $\mfa$ is $\mfm$-primary, $G_\mfa$ is equal to $G_{\log \abs{\mfa} }$, the Green function of a psh weight $\log \abs{\mfa}$, cf. Definition~\ref{Gu}. 
 (Note that $G_{\log \abs{\mfa} }$ is well defined, independent of the actual psh function represented by the notation $\log \abs{\mfa}$.)

Now given a graded system of ideals $\mfab$, let $h_k := \frac{1}{k} G_{\mfa_k}$.
Since $\mfa_k^m \subset \mfa_{km}$ holds from the defining condition for a graded system of ideals, we have the relation $$ m \log \abs{\mfa_k} \le \log \abs{\mfa_{km}} + O(1).$$  Hence we have  $h_k \le h_{km}$. In particular, $h_{k!}$ is an  increasing sequence in $k$ and it converges to 
 $h_{\mfab}$ defined by $h_{\mfab} (z) := \sup_m h_m (z)$.

\begin{definition}\label{Gadot} \cite[\S 5]{R13}
 We define  the {Green function} $G_{\mfab}$ associated to $\mfab$ to be the upper semicontinuous regularization $G_{\mfab}:= (h_{\mfab})^*$.

\end{definition}

  We now have the following properties.

\begin{lemma}\label{GG}

Let $G_{\mfab}$ and $\vp_{\mfab}$ be Green and Siu functions defined as above on $D$ for  a graded system of ideals $\mfa_\bullet$.

(1) For every real $c > 0$, we have the equality of  multiplier ideals  $$ \JJ(c G_{\mfa_\bullet} ) = \JJ  (c \cdot \mfab) = \JJ(c \vp_{\mfa_\bullet}).$$ In particular, the psh functions $G_{\mfab}$ and $\vp_{\mfab}$ are v-equivalent.

(2)  We have  $G_{\vp_{\mfa_\bullet}} \ge  G_{\mfa_\bullet} $ on $D$.

\end{lemma}

 \begin{proof}

(1) 
 Let $h_k=\frac1{k} G_{\mfa_{k}}$ (as above). Then the subsequence $h_{k!}$ increases almost everywhere to $G_{\mfab}$.  
  By the version of the strong openness  theorem~\cite{GZ} for increasing sequences (see, e.g., \cite{Hi17}, \cite{Le17}, cf.\cite{Hi}), $\JJ(h_{k!})$ stabilizes  for $k$ sufficiently large and is equal to $\JJ(G_{\mfab})$.  By definition of the asymptotic multiplier ideal, the stabilized $\JJ(h_{k!})$ is equal to the asymptotic multiplier ideal.

  The second equality is precisely Theorem~\ref{ksb}. We remark that the same argument as in the first equality using the strong openness with respect to the increasing sequence given by partial sums in the infinite series defining $\vp_{\mfab}$ provides an alternative proof for Theorem~\ref{ksb}. \footnote{We note that both arguments depend on the strong openness theorem~\cite{GZ}, cf.\cite{Le17}, \cite{Hi17}. }

 (2)  Let $\vp := \vp_{\mfab}$.   Since $\vp\ge \frac1k\log|\mfa_k|+ \log\epsilon_k$ from \eqref{siu}, we have $$G_\vp \ge G_{\frac1k\log|\mfa_k|}=\frac1kG_{\log|\mfa_k|}=h_k .$$ Taking $k=m!$ with $m\to\infty$, this gives $G_\vp\ge G_{\mfab}$ since $h_{m!}$ converges increasingly to $h_{\mfab}$ whose upper semicontinuous regularization $(h_{\mfab})^*$ is equal to $G_{\mfab}$.

 \end{proof}

 Note that the Green function $G_{\mfab}$ is uniquely determined by a graded system of ideals $\mfab$ (on a given domain) while the Siu function $\vp = \vp_{\mfab}$ depends on choices of generators and coefficients taken in its definition \eqref{siu}. We have the following fact. 
 
\begin{prop}\label{siusiu}

\begin{enumerate}

\item There exist a graded system of ideals $\mfab$ (on some $X$) and its Siu functions $\vp:=\vp_{\mfab}$ and $\psi :=\psi_{\mfab}$ such that  $\vp$ and $\psi$ do not have equivalent singularities.

\item  Given a graded system of ideals $\mfab$, different Siu functions of $\mfab$ are always v-equivalent. 

\end{enumerate}

\begin{proof} 

For (1), in \cite[Thm.3.5]{K14}, such an example of $\mfab$ was given using a section ring of a line bundle that is not finitely generated on a smooth projective variety. 

 For (2), let $\vp := \vp_{\mfab}$ be an arbitrary Siu psh function associated to given $\mfab$.  We note that for every real $c>0$, the multiplier ideal $\JJ (c \vp)$ is equal to the asymptotic multiplier ideal $\JJ( c \cdot \mfab)$ by Lemma~\ref{GG} (1). As mentioned after Definition~\ref{vequiv}, this means that the v-equivalence class of different Siu psh functions of $\mfab$  is uniquely determined. 
\end{proof}

\end{prop}

 \medskip

\subsection{Zero-dimensional ideals}

Now we specialize to the case when a graded system of ideals $\mfab$ consists of zero-dimensional (i.e. $\mfm$-primary) ideals at a point of a complex manifold.  In this case,  we have the relation $G_{\mfa_1} \le G_{\mfab}$ from the construction of $G_{\mfab}$. Then it follows that $G_{\mfab}$ is maximal on $D \setminus \{ 0 \}$ and that $G_{\mfab}$ is equal to zero on $\partial D$, i.e. $G_{\mfab} (z) \to 0$ as $z \to \partial D$ since these properties are satisfied by $G_{\mfa_1} = G_{\log \abs{\mfa_1}}$ (cf. \S 2.4). 

\begin{proposition}\label{eaga}

 If $\mfa_k$ is zero-dimensional  for every $k$,  we have the equality:
 $$e(\mfab) = L_n (G_{{\mfab}}, 0).$$
 

\end{proposition} 

\begin{proof} 
 This was given in the proof of \cite[Prop. 5.1]{R13} :  we  recall the argument for the convenience of readers. In the definition of $G_{\mfab}$, we used the sequence $h_k = \frac{1}{k} G_{\mfa_k}$. 
 By \cite[Lem. 2.1]{D09}, for each $k$, we have  $L_n (h_k, 0) = \frac{1}{k^n} e(\mfa_k)$.  Since the subsequence  $h_{k!}$ is increasing and converges to $G_{\mfab}$ almost everywhere, we have $L_n (h_{k!}, 0) \to L_n (G_{\mfab}, 0)$.   This should be equal to the limit of $\frac{1}{k^n} e(\mfa_k)$, which is nothing but $e(\mfab)$ (by the definition of $e(\mfab)$). 
\end{proof} 

Now we prove Theorem~\ref{eaebeq}. 

\begin{proof}[Proof of Theorem~\ref{eaebeq}]
  
    Let $G := G_{\mfab}$. We know from \cite{D92}, \cite{D13} that the $m$-th Demailly approximant of $G$ satisfies $$G_m = \frac{1}{m} \log \abs{\JJ(mG)} + O(1) .$$ Since $G_m$ has analytic singularities, its residual Monge-Amp\`ere mass is equal to the Samuel multiplicity by \cite[Lem. 2.1]{D09} :

 \begin{equation}\label{LG}
  \frac{e( \JJ(m G) ) }{m^n}  = L_n ( G_m , 0) .
  \end{equation}

This yields the last equality of \eqref{GGG} below. 
Since it is well known from \cite{ELS03} that $e(\mfa_\bullet) \ge e(\mfb_\bullet)$ always holds, we  have

 \begin{equation}\label{GGG}
   L_n (G, 0) = e(\mfa_\bullet)   \ge e(\mfb_\bullet) = \lim_{m \to \infty}  \frac{e(\mfb_m)}{m^n}    =  \lim_{m \to \infty}  \frac{e(\JJ(m G)) }{m^n} = \lim_{m \to \infty}  L_n (G_m, 0)
 \end{equation}
  where the second to last equality follow from  Lemma~\ref{GG} (1).  The first equality is given by Proposition~\ref{eaga}.  The conclusion now follows from \eqref{GGG} since Demailly's strong continuity for $G$ states that $L_n (G, 0) = \lim_{m \to \infty}  L_n (G_m, 0)$. 
\end{proof}

 Now combining this with Theorem~\ref{eaeb}, we have

\begin{corollary}\label{Conjec}

Demailly's strong continuity holds for the psh functions $G_{\mfa_\bullet}$ and $\vp_{\mfab}$ associated to a graded system of ideals $\mfab$.

\end{corollary}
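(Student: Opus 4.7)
The plan is to establish the corollary in two stages, handling the two psh weights in the statement separately: first $G_{\mfa_\bullet}$, by a direct appeal to Theorem~\ref{eaebeq} and Theorem~\ref{eaeb}, and then $\vp_{\mfab}$, by bootstrapping from the first stage through the v-equivalence $\vp_{\mfab}\sim_v G_{\mfa_\bullet}$ supplied by Lemma~\ref{GG}(1).

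For $G_{\mfa_\bullet}$ the argument is a single-step combination: Theorem~\ref{eaebeq} shows that Conjecture~\ref{dem} for $G_{\mfa_\bullet}$ is equivalent to the Samuel-multiplicity equality $e(\mfa_\bullet)=e(\mfb_\bullet)$, and Theorem~\ref{eaeb} from the appendix gives that equality, yielding $L_n((G_{\mfa_\bullet})_m,0)\to L_n(G_{\mfa_\bullet},0)=e(\mfa_\bullet)$ via Lemma~\ref{GG}(3). For $\vp_{\mfab}$ I will start from Lemma~\ref{GG}(1), which gives $\JJ(m\vp_{\mfab})=\JJ(mG_{\mfa_\bullet})=\mfb_m$ for every $m>0$. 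Since the Demailly approximants are determined by the multiplier ideals up to equivalent singularities, $(\vp_{\mfab})_m$ and $(G_{\mfa_\bullet})_m$ both carry analytic singularities described by $\mfb_m^{1/m}$ and hence share the same $n$-th Lelong number. By the first stage,
\[
L_n\bigl((\vp_{\mfab})_m,0\bigr)\;=\;L_n\bigl((G_{\mfa_\bullet})_m,0\bigr)\;\xrightarrow[m\to\infty]{}\;e(\mfa_\bullet),
\]
so Conjecture~\ref{dem} for $\vp_{\mfab}$ reduces to verifying the single identity $L_n(\vp_{\mfab},0)=e(\mfa_\bullet)$.

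I will prove this identity by two matching inequalities. The upper bound comes by passing to the greenification: Proposition~\ref{R06} gives $L_n(\vp_{\mfab},0)=L_n(G_{\vp_{\mfab}},0)$, and combining $G_{\vp_{\mfab}}\ge G_{\mfa_\bullet}$ from Lemma~\ref{GG}(2) with Lemma~\ref{GG}(3) yields $L_n(G_{\vp_{\mfab}},0)\le L_n(G_{\mfa_\bullet},0)=e(\mfa_\bullet)$, as $G_{\vp_{\mfab}}$ is then the less singular of the two maximal psh weights. For the matching lower bound I will invoke Demailly's $L^{2}$ estimate $\vp_{\mfab}\le (\vp_{\mfab})_m+C/m$ (cf.~\cite{D92}); monotonicity of greenification (Proposition~\ref{basic}(2)), together with Proposition~\ref{basic}(4) applied to the approximants (which have analytic singularities), produces $G_{\vp_{\mfab}}\le G_{(\vp_{\mfab})_m}=(\vp_{\mfab})_m+O(1)$. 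The pluripotential comparison principle applied to the maximal psh weights $G_{\vp_{\mfab}}$ and $G_{(\vp_{\mfab})_m}$ on the bounded hyperconvex domain (both vanishing on $\partial D$) then gives $L_n(G_{\vp_{\mfab}},0)\ge L_n(G_{(\vp_{\mfab})_m},0)=L_n((\vp_{\mfab})_m,0)$ for every $m$, and letting $m\to\infty$ delivers $L_n(\vp_{\mfab},0)\ge e(\mfa_\bullet)$.

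The main obstacle I expect is the comparison step between the two Green functions $G_{\vp_{\mfab}}$ and $G_{(\vp_{\mfab})_m}$: the monotonicity $u\le v+O(1)\Rightarrow L_n(u,0)\ge L_n(v,0)$ for maximal psh weights follows from the classical comparison principle (cf.~\cite{BT76,D87,D93}), but its correct invocation relies on both weights having the same boundary behaviour on $\partial D$, which here is the value $0$ for both greenifications. Once this monotonicity is in hand, everything else follows from results already assembled in the preceding sections.
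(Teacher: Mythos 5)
Your proposal is correct and follows essentially the same route as the paper: the $G_{\mfa_\bullet}$ case is Theorem~\ref{eaebeq} combined with Theorem~\ref{eaeb}, and the $\vp_{\mfab}$ case is established by squeezing $L_n(\vp_{\mfab},0)$ between $e(\mfa_\bullet)$ from above (via Proposition~\ref{R06}, Lemma~\ref{GG}(2),(3) and the Demailly comparison theorem) and the limit $\lim_m L_n((\vp_{\mfab})_m,0)=e(\mfb_\bullet)=e(\mfa_\bullet)$ from below. The only cosmetic difference is that you route the lower bound through the greenifications $G_{\vp_{\mfab}}\le G_{(\vp_{\mfab})_m}$, whereas the paper compares $\vp_{\mfab}\le(\vp_{\mfab})_m+O(1)$ directly; since the comparison theorem for residual Monge--Amp\`ere masses is a local statement, that detour (and the worry about matching boundary values) is unnecessary but harmless.
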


\begin{proof}[Proof of Corollary~\ref{Conjec}]

For $G_{\mfa_\bullet}$, it follows from Theorem~\ref{eaebeq} and Theorem~\ref{eaeb}.
For $\vp = \vp_{\mfab}$,  we see that

  \begin{align*}
  L_n (\vp, 0)   &= L_n (G_\vp , 0) \qa \text{  \qa \qa \qa  by Proposition~\ref{R06} } \\
  &\le  L_n (G_{\mfa_\bullet}, 0) \text{   \qa \qa \qa \;   by Lemma~\ref{GG} (2) and Comparison Theorem~\cite{D93}  } \\
  & = e(\mfab)  \text{ \qa \qa \qa \qa \qa \qa   by Proposition~\ref{eaga} } \\
  &= e(\mfbb) \qa \qa  \qa \qa  \text{ \qa \qa  by Theorem~\ref{eaeb} }   \\
  &  = \lim_{m \to \infty} \frac{e(\mfb_m)}{m^n} = \lim_{m \to \infty} L_n (\vp_m, 0) \;   \le L_n (\vp, 0)
  \end{align*}

 \noi which should be a chain of equalities. Here $\vp_m$ is the $m$-th Demailly approximation of $\vp$.

 The equality $\displaystyle \frac{1}{m^n} e(\mfb_m) =  L_n (\vp_m, 0)$ holds due to  the fact that for a psh function with analytic singularities given by a $\mfm$-primary ideal, the $n$-th Lelong number is equal to the Samuel multiplicity of the ideal~\cite[Lem. 2.1]{D09}. The limit $\lim_{m \to \infty} L_n (\vp_m, 0)$ exists since the previous limit defining $e(\mfbb)$ exists. The relation $\vp \le \vp_m + O(1)$ was used in the last inequality.
\end{proof}

\begin{remark1}
In the special case when $\mfab$ consists of monomial ideals, Corollary~\ref{Conjec} was known by \cite{R13} where Demailly's strong continuity was shown for every toric psh weights (cf. \cite{KR18} for generalization of this to toric psh functions in the Cegrell class).
 \end{remark1}

 Another consequence of Theorem~\ref{eaeb} is the following

\begin{corollary}\label{ggvp}

 Let $\mfab$ be a graded system of $\mfm$-primary ideals at a point $p$ (with the maximal ideal $\mfm$) of a complex manifold $X$.  Let $\vp = \vp_{\mfab}$ be a Siu function of
$\mfab$ defined in a bounded hyperconvex domain $D$ in $X$ containing  $p$. Then we have the equality $G_{\vp} \equiv G_{\mfab}$ on $D$. 

\end{corollary}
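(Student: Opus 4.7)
The plan is to upgrade Lemma~\ref{GG}(2), which already gives $G_\vp \ge G_{\mfab}$, to an equality by comparing residual Monge-Amp\`ere masses at $0$ and then invoking a uniqueness property for maximal psh weights on the bounded hyperconvex domain $D$.

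First, I would observe that the chain of (in)equalities displayed in the proof of Corollary~\ref{Conjec} in fact collapses to a chain of equalities, since its extreme terms coincide. In particular,
\[
(dd^c G_\vp)^n(\{0\}) \;=\; L_n(G_\vp,0) \;=\; L_n(G_{\mfab},0) \;=\; (dd^c G_{\mfab})^n(\{0\}) \;=\; e(\mfab).
\]
Consequently, $G_\vp$ and $G_{\mfab}$ are negative maximal psh weights on $D$, both having zero boundary values on $\partial D$, with $G_\vp \ge G_{\mfab}$ and identical residual masses at $0$. Maximality and boundary vanishing of $G_\vp$ come from the greenification construction of \cite{R06,R13a}; for $G_{\mfab}$, they are inherited from the family $h_k = \frac{1}{k}G_{\mfa_k}$ appearing in Definition~\ref{Gadot} (each $h_k$ being maximal with zero boundary values) via the upper semicontinuous regularization of an increasing sup.

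Second, I would apply Proposition~\ref{2.2}(1) to the (constant, hence trivially decreasing) sequence $\vp_j := G_\vp$ and the weight $\vp := G_{\mfab}$. All hypotheses are immediate from the previous step: $\vp_j \ge \vp$, $\vp_j$ decreases to $\psi := G_\vp$, and the trivial limit $(dd^c\vp_j)^n(\{0\}) \to (dd^c\vp)^n(\{0\})$ holds since the constant value equals $(dd^c\vp)^n(\{0\})$. The proposition then forces $\psi = \vp$, i.e.\ $G_\vp = G_{\mfab}$.

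The main point to check is that $G_{\mfab}$ really is a maximal psh weight with zero boundary values, i.e.\ that upper semicontinuous regularization of an increasing family of such weights on a hyperconvex domain preserves both properties; this is standard pluripotential theory, implicit in \cite{R13a}. Should a constant-sequence application of Proposition~\ref{2.2}(1) be considered degenerate, an equivalent alternative is to invoke uniqueness of solutions to the Monge-Amp\`ere Dirichlet problem $(dd^c u)^n = e(\mfab)\,\delta_0$ with $u|_{\partial D} = 0$ on the hyperconvex domain $D$, which both Green functions solve.
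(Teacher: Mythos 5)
Your proof is correct and reaches the same conclusion by essentially the same route as the paper: both take the dominance $G_\vp \ge G_{\mfab}$ from Lemma~\ref{GG}(2) together with the comparison of residual Monge-Amp\`ere masses extracted from the chain in the proof of Corollary~\ref{Conjec}, and then conclude by a domination-type uniqueness argument. The paper invokes the Domination Principle \cite[Lem.~6.3]{R06} directly, while you apply Proposition~\ref{2.2}(1) with a constant sequence (or, equivalently, uniqueness for the Monge-Amp\`ere Dirichlet problem on the hyperconvex domain); these technical tools are interchangeable for the purpose of this deduction, so the substance of the argument is the same.
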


\begin{proof}

 Note that we have the inequality $G_\vp \ge   G_{\mfa_\bullet} $ from Lemma~\ref{GG} (2). Also we have the equality   $L_n (G_\vp , 0) =  L_n (G_{\mfa_\bullet}, 0)$ from the proof of Corollary~\ref{Conjec}, which used Theorem~\ref{eaeb}. Now we apply the Domination Principle, Lemma~\ref{DP} to conclude. (Note that the conditions needed in Lemma~\ref{DP} for $G_{\mfab}$ (as well as for $G_{\vp}$) are already mentioned in the beginning of this subsection.) 
\end{proof}

 Recall from Proposition~\ref{siusiu} that, for general $\mfab$, the equivalence class of Siu psh functions is not necessarily uniquely determined. 
 Namely, there exists $\mfab$ (cf. \cite[Thm. 3.5]{K14}) for which there can be infinitely many different  Siu functions (say $\vp, \psi, \ldots$) with mutually non-equivalent singularities.  Note however they are all v-equivalent to each other by Proposition~\ref{siusiu}. 
 Corollary~\ref{ggvp} says that these v-equivalent psh functions  share the same Green function ($G_\vp = G_\psi = \ldots$) since $G_{\mfab}$ depends only on $\mfab$.

 We also remark that, in this regard,  we do not know of an example of psh weights $u$ and $v$ that are v-equivalent to each other but having different Green functions $G_u \neq G_v$ (on a bounded hyperconvex domain).

 \begin{example}\label{ggaa}

 In the case of toric psh functions and monomial ideals, we can say more.  As in \cite[(2.8), (2.9)]{KS19}, let $\mfab$ and $\mfab'$ be two different graded systems of monomial ideals which share the same Newton convex body. Then Siu functions of $\mfab$ and of $\mfab'$ are all v-equivalent to each other. Their Green functions $G_{\mfab}$ and $G_{\mfab'}$ also coincide since each of them is equivalent to the indicator function which is equal to the Green function for the the unit polydisk, cf. \cite[Thm. 3.1]{R11}.

 \end{example}

\medskip

\section{K\"uronya-Wolfe constants of graded systems of ideals}

 K\"uronya and Wolfe considered the notion of stability of graded systems of ideals. Roughly speaking, $\mfab$ is stable if it has strictly positive asymptotic vanishing order along every irreducible subvariety $Z$ that appears in the support of $\mfa_m$ for $m \gg 0$. In the case of $\mfm$-primary ideals, since the only possible $Z$ is the point at hand,  it simplifies as follows.

 \begin{definition}\label{stable}\cite{KW}
 Let $X$ be an irreducible smooth complex variety. Let $\mfab$ be a graded system of $\mfm$-primary ideals where $\mfm$ is the maximal ideal of a point $p \in X$. We say that $\mfab$ is stable if

(1) $\mfa_m \neq 0$ for all $m \gg 0$ and

(2) $\ord_p (\mfab) > 0$.
 \end{definition}

 Here $\ord_p (\mfab)$ is the asymptotic order of vanishing of $\mfab$ at $p$, cf. \cite[Def. 2.2]{KW}. The condition (1) is in fact  built into the definition of  $\mfm$-primary graded systems of ideals we consider in this paper (see \S 1. Introduction). \footnote{ In general when without the condition (1), one can consider the subsemigroup of indices $S = S(\mfab) := \{ m \ge 1: \mfa_m \neq (0) \}$ as in \cite[\S 2.1]{JM}.} Also the condition (2) is not restrictive (to assume in our main result of this section, Corollary~\ref{KWC}) either, since the special  case $\ord_p (\mfab)=0$  is already known to be equivalent to $e(\mfab)=0$ from \cite[Thm. 1.7, Remark 3.8]{M02}.

 Now assuming Definition~\ref{stable}, the main result of \cite{KW}, Theorem 3.3,  showed the following  (here we restrict to the case of zero-dimensional ideals) :

 \begin{theorem}[K\"uronya-Wolfe]\label{KW}

  Let $\mfab$ be a stable graded system of $\mfm$-primary ideals. Then there exist positive real constants $C$ and $D$ such that for all sufficiently large integer $m$, we have the following containment for the asymptotic multiplier ideals (Definition~\ref{am})

 \begin{equation}\label{Kuro}
  \mfb_{Cm + D} = \JJ \left( (Cm + D) \cdot \mfab \right)  \subseteq  \mfa_m .
 \end{equation}

 \end{theorem}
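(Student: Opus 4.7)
The plan is to sandwich $\mfb_{Cm+D}$ between explicit powers of the maximal ideal, using the $\mfm$-primary hypothesis from below and the positive asymptotic order of vanishing $\alpha := \ord_p(\mfab) > 0$ from above. Since $\mfa_1$ is $\mfm$-primary, I would first fix an integer $k \ge 1$ with $\mfm^k \subseteq \mfa_1$; the graded-system inclusion $\mfa_1^m \subseteq \mfa_m$ then yields the uniform lower bound $\mfm^{km} \subseteq \mfa_m$ for every $m \ge 1$.

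Next I would derive the uniform upper bound $\mfb_c \subseteq \mfm^{\lfloor c\alpha \rfloor - n + 1}$ valid for $c\alpha \ge n$. The sequence $q \mapsto \ord_p(\mfa_q)$ is subadditive, since $\mfa_q \cdot \mfa_{q'} \subseteq \mfa_{q+q'}$ and $\ord_p$ is additive on products of ideals; hence by Fekete's lemma $\alpha = \inf_{q\ge 1} \ord_p(\mfa_q)/q$, and in particular $\mfa_q \subseteq \mfm^{\lceil q\alpha \rceil}$ for every $q$. Monotonicity and scaling of multiplier ideals, combined with the standard formula $\JJ(\mfm^s) = \mfm^{\lfloor s \rfloor - n + 1}$ for $s \ge n$, then give
\[
\JJ\!\left(\tfrac{c}{q}\cdot\mfa_q\right) \;\subseteq\; \JJ\!\left(\mfm^{(c/q)\lceil q\alpha\rceil}\right) \;\subseteq\; \JJ(\mfm^{c\alpha}) \;=\; \mfm^{\lfloor c\alpha\rfloor - n + 1}
\]
for every $q \ge 1$, where the middle inclusion uses $(c/q)\lceil q\alpha\rceil \ge c\alpha$ and the fact that a larger coefficient produces a smaller multiplier ideal. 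Since $\mfb_c$ is by Definition~\ref{am} the maximum of the ideals on the left over $q \ge 1$, the uniform bound $\mfb_c \subseteq \mfm^{\lfloor c\alpha\rfloor - n + 1}$ follows.

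Combining the two bounds, whenever $\lfloor c\alpha \rfloor - n + 1 \ge km$ we obtain $\mfb_c \subseteq \mfm^{km} \subseteq \mfa_m$; a sufficient condition is $c \ge (km + n - 1)/\alpha$, so I would set $C := k/\alpha$ and $D := (n-1)/\alpha$. The side condition $c\alpha \ge n$ needed for the formula on $\JJ(\mfm^{c\alpha})$ then reduces to $m \ge 1/k$, which is automatic. The genuinely non-formal step, and therefore the main potential pitfall, is the upper bound on $\mfb_c$: one must leverage that $\alpha$ is the infimum of $\ord_p(\mfa_q)/q$ so that the $q$-dependence of the naive bound $(c/q)\lceil q\alpha\rceil$ is dominated uniformly by $c\alpha$, which is exactly where the subadditivity of $\ord_p(\mfa_q)$ is essential. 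Compared with the argument of \cite{KW}, which handles vanishing along arbitrary subvarieties via asymptotic subadditivity of multiplier ideals, the $\mfm$-primary case admits this more elementary direct approach.
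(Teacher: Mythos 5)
Your proof is correct. The paper itself does not reprove this theorem; it cites \cite{KW} and merely remarks that the real-exponent form follows from the proof there. Your argument is a self-contained, more elementary alternative tailored to the $\mfm$-primary case, whereas the general argument in \cite{KW} handles positivity of asymptotic vanishing along an arbitrary irreducible subvariety $Z$ and relies on the subadditivity theorem for (asymptotic) multiplier ideals together with a Brian\c{c}on--Skoda/Skoda-type input. What your approach buys: because $Z=\{p\}$ and the ambient local ring is regular, you can replace all the multiplier-ideal machinery on the upper-bound side by the single explicit formula $\JJ(s\cdot\mfm)=\mfm^{\lfloor s\rfloor-n+1}$ for $s\ge n$, using only that $\ord_p$ is a valuation (so $q\mapsto\ord_p(\mfa_q)$ is subadditive and Fekete gives $\alpha=\inf_q\ord_p(\mfa_q)/q$, whence $\mfa_q\subseteq\mfm^{\lceil q\alpha\rceil}$), while the lower bound $\mfm^{km}\subseteq\mfa_1^m\subseteq\mfa_m$ uses only that $\mfa_1$ is $\mfm$-primary. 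All the steps check out: monotonicity and the scaling property $\JJ(\lambda\cdot\mfa^k)=\JJ(\lambda k\cdot\mfa)$ give $\JJ(\tfrac{c}{q}\cdot\mfa_q)\subseteq\JJ(c\alpha\cdot\mfm)$ uniformly in $q$, and since $\mfb_c$ is one of the members $\JJ(\tfrac{c}{q}\cdot\mfa_q)$ by Definition~\ref{am}, the bound $\mfb_c\subseteq\mfm^{\lfloor c\alpha\rfloor-n+1}$ follows; the arithmetic then yields $C=k/\alpha$, $D=(n-1)/\alpha$ with the side condition $c\alpha\ge n$ automatically satisfied for $m\ge 1$. The trade-off is that this route does not generalize to higher-dimensional $Z$, where no such clean formula for $\JJ(s\cdot I_Z)$ is available and one really does need the subadditivity of multiplier ideals as in \cite{KW}.
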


 The first equality in \eqref{Kuro} simply refers to the two different notations. 

\begin{definition}\label{kwc}
 When \eqref{Kuro} holds, we will say that $C > 0$ is a K\"uronya-Wolfe constant for $\mfab$.
\end{definition}

 Note that we are not taking the infimum of such $C$'s in this definition since the infimum may not be a K\"uronya-Wolfe constant.

 \begin{remark1}
   In \cite[Thm. 3.3]{KW},  $ \JJ \left( \lceil Cm + D \rceil \cdot \mfab \right) $ is taken in the place of $ \JJ \left( (Cm + D) \cdot \mfab \right)$ to deal with only integer indices for the asymptotic multiplier ideals. It is easy to check the above statement of Theorem~\ref{KW} from its proof.
   \end{remark1}

 Now we can give an analytic proof of $e(\mfa_\bullet) = e(\mfb_\bullet)$ in the case when one can take the constant $C$ to be at most $1$ in Theorem~\ref{KW}. According to \cite[p.802]{KW}, this is indeed the case for many situations of geometric interest although it does not always hold.

\begin{corollary}\label{KWC}

 Suppose that a graded system $\mfab$ of $\mathfrak m$-primary ideals  has a K\"uronya-Wolfe constant $C \le 1$ in Theorem~\ref{KW} and satisfies $\ord_p (\mfab) >0$. Then the Green function $G_{\mfa_\bullet}$ and Siu functions $\vp_{\mfab}$ associated to $\mfab$ are tame psh weights. Therefore  $e(\mfa_\bullet) = e(\mfb_\bullet)$ holds with an analytic proof independent of Theorem~\ref{eaeb}.

\end{corollary}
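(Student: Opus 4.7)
The plan is to establish that both $G_{\mfab}$ and the Siu functions $\vp_{\mfab}$ satisfy the tameness property of Definition~\ref{5psh}, and then to invoke Theorem~\ref{eaebeq} together with the known validity of Conjecture~\ref{dem} for tame weights \cite{BFJ} to conclude $e(\mfab) = e(\mfbb)$. Throughout I work on a small bounded hyperconvex neighborhood of $p$ on which $G_{\mfab}$, a fixed $\vp_{\mfab}$, and each $\log|\mfa_k|$ are non-positive (after absorbing constants into the usual $O(1)$).

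The key combinatorial input is the following. Since $\ord_p(\mfab)>0$, the graded system $\mfab$ is stable in the sense of Definition~\ref{stable}, so Theorem~\ref{KW} provides $\mfb_{Cm+D}\subseteq\mfa_m$ for all $m$ sufficiently large, with $C\le 1$ and $D>0$. For large $N$ set $m_N:=\lfloor(N-D)/C\rfloor$; then $Cm_N+D\le N$, and the monotonicity of asymptotic multiplier ideals in the coefficient combined with \eqref{Kuro} gives $\mfb_N\subseteq\mfb_{Cm_N+D}\subseteq\mfa_{m_N}$. The hypothesis $C\le 1$ is used essentially here: it forces $N-m_N\le C_0$, equivalently $m_N/N\ge 1-C_0/N$, for a constant $C_0$ depending only on $C$ and $D$.

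For $u:=G_{\mfab}$, Lemma~\ref{GG}(1) gives $\JJ(Nu)=\mfb_N$, so the $N$-th Demailly approximant $u_N$ equals $\tfrac{1}{N}\log|\mfb_N|+O(1)$. Combining the containment $\mfb_N\subseteq\mfa_{m_N}$ with Proposition~\ref{basic}(1) applied to $\log|\mfa_{m_N}|$ and with the inequality $\tfrac{1}{m_N}G_{\mfa_{m_N}}=h_{m_N}\le G_{\mfab}$ from Definition~\ref{Gadot}, one chains
\begin{equation*}
\log|\mfb_N|\ \le\ \log|\mfa_{m_N}|+O(1)\ \le\ G_{\mfa_{m_N}}+O(1)\ \le\ m_N\,u+O(1).
\end{equation*}
Dividing by $N$ and using $u\le 0$ together with $m_N/N\ge 1-C_0/N$ yields $u_N\le(1-C_0/N)u+O(1)$, which is the nontrivial half of tameness (the other half $u\le u_N+O(1)$ being automatic). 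For $\vp:=\vp_{\mfab}$, Theorem~\ref{ksb} gives $\JJ(N\vp)=\mfb_N$ as well, so $\vp_N=\tfrac{1}{N}\log|\mfb_N|+O(1)$; meanwhile the series formula \eqref{siu} provides $\log|\mfa_k|\le k\vp+O(1)$ for every $k$. Applying this at $k=m_N$ and running the same chain yields $\vp_N\le(1-C_0/N)\vp+O(1)$.

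Hence both $G_{\mfab}$ and $\vp_{\mfab}$ are tame; Conjecture~\ref{dem} then holds for them by \cite{BFJ}, and Theorem~\ref{eaebeq} applied to $G_{\mfab}$ delivers $e(\mfab)=e(\mfbb)$ without invoking Theorem~\ref{eaeb}. The main obstacle, and the precise point where $C\le 1$ is essential, is controlling the index gap $N-m_N$: if $C>1$ then this gap grows linearly in $N$, pushing $m_N/N$ toward $1/C<1$, and one only obtains $u_N\le (1/C)u+O(1)$, which is strictly weaker than tameness. The remaining care needed is sign bookkeeping, to ensure inequalities flip correctly when multiplying the non-positive weights by $m_N/N$ versus $1-C_0/N$.
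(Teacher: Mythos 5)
Your proof is correct and follows essentially the same route as the paper: use the K\"uronya--Wolfe containment with $C\le 1$ to get $\mfb_N\subseteq\mfa_{m}$ for an index $m$ with $N-m$ bounded, translate this into the inequality $\vp_N\le(1-C_0/N)\vp+O(1)$ for $\vp=G_{\mfab}$ or $\vp_{\mfab}$, conclude tameness, and then invoke \cite{BFJ} together with Theorem~\ref{eaebeq}. The only cosmetic difference is the choice of index (the paper simply takes $m=k-D$ after noting $\mfb_{m+D}\subseteq\mfb_{Cm+D}$, while you use $m_N=\lfloor(N-D)/C\rfloor$), which leads to the same bound $N-m_N<D+1$.
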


\begin{proof}[Proof of Corollary~\ref{KWC}]

  Since $C \le 1$, we have  $\mfb_{m+D} \subseteq \mfb_{Cm + D} \subseteq \mfa_m$ for $m \gg 0$. Therefore it follows that $$ \frac{1}{k} \log \abs{\mfb_k}  \le  \frac{1}{k}  \log \abs{\mfa_{k-D}} + O(1) = \frac{k-D}{k}  \frac{1}{k-D} \log \abs{\mfa_{k-D}} + O(1)  $$ for $k = m+D \gg 0$. On the other hand, by the definition of  $G_{\mfa_\bullet}$ and $\vp_{\mfab}$, we have $  \frac{1}{k-D} \log \abs{\mfa_{k-D}}  \le  \vp  + O(1) $ where $\vp$ denotes $G_{\mfa_\bullet}$ or $\vp_{\mfab}$.

 Since the $k$-th Demailly approximant $\vp_k$ of $\vp$ is equal to $\frac{1}{k} \log \abs{\mfb_k}$ up to $O(1)$,  we get  $ \vp_k   \le  (1 - \frac{D}{k} ) \vp + O(1) $ which says that $\vp$ is tame when combined with $\vp \le \vp_k + O(1)$, a basic property of Demailly approximation. Since Demailly's strong continuity holds for tame  psh weights due to \cite{BFJ}, we conclude by applying Theorem~\ref{eaebeq}.
\end{proof}

\begin{remark1}\label{els421}

   Corollary~\ref{KWC} generalizes \cite[Prop. 3.11]{ELS03} since, as noted in \cite[p.421]{ELS03}, the condition therein $\delta \mfb_m \subset \mfa_m \; (m \gg 0)$  for a fixed $\delta$ implies the condition $\mfb_{m+D} \subset \mfa_m \; (m \gg 0)$ using the fact that $\mfab$ are valuation ideals.

\end{remark1}

 When all the K\"uronya-Wolfe constants are greater than (and bounded away from) $1$, we have an example of a toric psh function with isolated singularities that is not tame.

 \begin{example} cf. \cite[Example 3.6]{KW}\label{KW1}
  Let $\mathfrak{m} = (x,y)$ be the maximal ideal of the origin in $\CC^2$. Let $\mfa_k = \mathfrak{m}^k (x^k, y)$. Let the asymptotic multiplier ideals of $\mfab$ be denoted by $\mfbb$.  Since $\mfbb$ is equal to the asymptotic multiplier ideals of another graded system of ideals $\mfab' = \mathfrak{m}^k$ as was noted in \cite{KW}, it follows that $\mfb_k = \mathfrak{m}^{k + 1 - n} = \mfm^{k-1}$ for $k \ge 1$.

  Now we will show that any Siu function $\vp = \vp_{\mfab}$ (associated to $\mfab$) is not tame. Let $\vp_k \; (k \ge 1)$ be the Demailly approximation sequence of $\vp$. Suppose that there exists $C>0$ such that   $\vp + O(1) \le  \vp_k \le  (1 - \frac{C}{k} ) \vp + O(1) $ for all $k \gg 0$. Since we need this condition for $k \gg 0$, we may assume that $C \ge n = 2$ (and $ 1 - \frac{C}{k} > 0$).   Up to equivalence of singularities, we can write $\vp_k = \frac{1}{k} \log \abs{ \mfb_k }$ (again in the notation of Definition~\ref{5psh}, (I)) and $\vp = \log (\sum_{p \ge 1}  \ep_p   \abs{\mfa_p}^{\frac{1}{p}}   )   $.   We then have
  \begin{align*}
 \frac{1}{k}   \log   \abs{\mathfrak{m}}^{k+1-n }   &\le    (1 - \frac{C}{k}) \log (\sum_{p \ge 1}  \ep_p   \abs{\mfa_p}^{\frac{1}{p}}   ) + O(1) \\
 &= (1 - \frac{C}{k}) \left( \log \abs{\mathfrak{m}} + \log (\sum_{p \ge 1}  \ep_p   \abs{(x^p, y)}^{\frac{1}{p}}  ) \right)  + O(1)
 \end{align*}
\noi  since  $\mfa_p =  \mathfrak{m}^p  (x^p, y) $. Hence we obtain, for $k \gg 0$,

 \begin{align*}
   (C+1-n) \log \abs{\mathfrak{m}} &\le  (k-C)   \log (\sum_{p \ge 1}  \ep_p   \abs{(x^p, y)}^{\frac{1}{p}}  )   + O(1) \text{\; and } \\
  (C+1-n) \log  \abs{x}    &\le  (k-C)   \log \abs{x}    +O(1)
  \end{align*}

\noi  where the last inequality is obtained when we restrict the psh functions on both sides to the line $y=0$, which gives contradiction for $k \gg 0$. Thus $\vp$ is not tame.

 \end{example}

\begin{remark1}

 As mentioned in \cite[Example 3.8]{R13}, a toric maximal psh weight is tame. This implies that $\vp = \vp_{\mfab}$ in this example is not maximal, even up to $O(1)$. Also this provides an example of $\vp$ and $\psi$ such that they are v-equivalent but only one of them is tame : take $\psi$ to be the Green function of $\vp$ which is nothing but $G_{\mfab}$ by Corollary~\ref{ggvp}.

\end{remark1}

\medskip

\section{Sup-analytic singularities}

 In this section, we will generalize Theorem~\ref{gs} to psh weights with \emph{sup-analytic singularities}, a natural class of psh weights which was introduced by \cite{R13}. They are defined as (up to greenification) the increasing limits of maximal psh weights with analytic singularities (Definition~\ref{supa}).   We will see that this class of psh weights with sup-analytic singularities indeed includes Green and Siu functions in Theorem~\ref{gs}, cf. Corollary~\ref{greensiu}.

 Throughout this section, we will consider psh weights $\vp$ on $D \subset \CC^n$, a bounded hyperconvex neighborhood of $0 \in \CC^n$. The $n$-th Lelong number $L_n (\vp, 0)$ is often denoted simply by $L_n (\vp)$. Also recall that the collection of all maximal psh weights is denoted by $MW_0$. 
 
\subsection{Sup-analytic singularities}

 Let $\vp$ be a psh weight at $0 \in \CC^n$, i.e. a psh germ with isolated singularities at $0$. We first recall a natural way to associate a graded system of ideals $\mfab$ to $\vp$. Define

 \begin{equation}\label{gdd}
 \mfa_k = \mfa_k (\vp) = \{ f \in \OO_{\CC^n, 0} :  \log \abs{f} \le k \vp + O(1) \} .
\end{equation}

\noi Here $\mfa_k$ is well-defined in view of Lemma~\ref{trivial}.  \footnote{Since a zero-dimensional ideal is determined by its stalk at $0$, it is indeed sufficient to require the condition in \eqref{gdd} for a germ $f$. }
 It is clear that $\mfa_k \mfa_l \subset \mfa_{k+l}$. In general, it is possible that all the graded pieces $\mfa_k$ are consisting of zero elements only. On the other hand, it is easy to see that $\mfab$ defined this way is a graded system of $\mfm$-primary ideals with $\mfa_k \neq \{ 0 \}$ for every $k$, exactly when $\vp$ is bounded below by log (see Definition~\ref{5psh}) using Proposition~\ref{bbb}.

\begin{remark1}
 When $\vp$ is a maximal psh weight, it is also possible to define $\mfa_k$ in \eqref{gdd} using relative types as in \cite[p.1231, (5.8)]{R13}.
\end{remark1}

 We recall the following definition from \cite{R13}, in which case $\mfab(\vp)$ defined by \eqref{gdd} will turn out to behave well.

\begin{dfn}\label{supa}

We will say that a maximal psh weight $\vp\in MW_0$ has {\textbf{sup-analytic singularities}} if  there exists a domain $D$ and  an increasing sequence of maximal psh weights $\psi_j\in \PSH(D)$ with analytic singularities such that  $\psi_j \to {\vp}$ almost everywhere on $D$ as $j \to \infty$.

Also we say that a psh weight $\vp \in W_0$ has sup-analytic singularities if its Green function $G_\vp$ on some domain $D$ does so.

\end{dfn}

Note that if a maximal psh weight has sup-analytic singularities, then it is bounded below by log, due to Proposition~\ref{bbb}.

\begin{example}\label{alpha}

 Let $\vp = \log (\abs{x} + \abs{y}^{\al} )$ near the origin of $\CC^2$ where $\al > 0$ is irrational. By \cite[Example 4.1]{K15}, $u$ does not have analytic singularities, but it is exponentially H\"older continuous. On the other hand, take $\psi_j = \log (\abs{x} + \abs{y}^{\al_j} )$ where $\al_j  > 0$ is a sequence of rational numbers converging to $\al$.  Then $\vp$ and $\psi_j$  are maximal psh weights  by King's formula~\cite[Chap. III, (8.18)]{DX} : thus Definition~\ref{supa} is checked for $\vp$.

\end{example}

 Sup-analytic singularities have the following characterization.

\begin{proposition}\label{sup-an} {\rm \cite[Prop. 5.4, 5.5]{R13}}
 Let $\vp$ be a maximal psh weight, i.e.  $\vp \in MW_0$. The following are equivalent :

 \begin{enumerate}

 \item

 $\vp$ has sup-analytic singularities.

 \item

 There exists a sequence of psh functions $\psi_j\in \PSH(D)$ with analytic singularities such that $\psi_j \le \vp$ and $L_n(\psi_j)\to L_n(\vp)$ as $j \to \infty$.

 \item

 We have $G_{\mfab (\vp)} \equiv G_\vp$ for the graded system of ideals $\mfab (\vp)$ defined by \eqref{gdd}.

 \end{enumerate}

\end{proposition}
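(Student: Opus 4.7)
The plan is to prove the three equivalences via the cycle $(1)\Rightarrow (2)\Rightarrow (3)\Rightarrow (1)$, with Proposition~\ref{2.2} and the greenification calculus from Section~3 as the main tools. Throughout I will use the standard fact that for a maximal weight $\vp\in MW_0$ the greenification satisfies $G_\vp=\vp$ up to $O(1)$, which lets me pass freely between $\vp$ and $G_\vp$.

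The implication $(1)\Rightarrow(2)$ is almost immediate. The sequence $\psi_j$ witnessing sup-analyticity consists of maximal psh weights satisfying $\psi_j\le\vp$ and increasing a.e.\ to $\vp$ (so $(\sup_j\psi_j)^*=\vp$), so Proposition~\ref{2.2}(2) applied directly to $\vp_j=\psi_j$ yields $L_n(\psi_j)\to L_n(\vp)$.

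The heart of the proof is $(2)\Rightarrow(3)$. After passing to an equivalent representation with rational exponents, I would write each $\psi_j$ from (2) as $c_j\log|I_j|+O(1)$ with $c_j\in\QQ_{>0}$ and $I_j$ an $\mfm$-primary ideal. The bound $c_j\log|I_j|\le\vp+O(1)$ combined with the definition \eqref{gdd} of $\mfa_k(\vp)$ yields, for every integer $k$ with $kc_j\in\ZZ$, the ideal containment $I_j^{kc_j}\subset\mfa_k(\vp)$, and hence the pointwise estimate
$kc_j\log|I_j|\le\log|\mfa_k(\vp)|+O(1)\le G_{\mfa_k(\vp)}+O(1)$.
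Dividing by $k$ and letting $k\to\infty$ along an arithmetic progression compatible with the denominator of $c_j$ gives $h_k\ge\psi_j+O(1)$ with a constant independent of $k$, so the usc regularization of the upper envelope satisfies $G_{\mfab(\vp)}\ge\psi_j+O(1)$ for every $j$. Consequently $L_n(G_{\mfab(\vp)})\le L_n(\psi_j)\to L_n(\vp)$, while the always-valid inclusion $G_{\mfab(\vp)}\le G_\vp$ yields $L_n(G_{\mfab(\vp)})\ge L_n(G_\vp)=L_n(\vp)$. The two Lelong numbers therefore agree, and the Domination Principle \cite[Lem. 6.3]{R06} (quoted in the text just before Definition~\ref{supa}) forces $G_{\mfab(\vp)}=G_\vp$, which is (3).

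The implication $(3)\Rightarrow(1)$ is built into the construction of $G_{\mfab(\vp)}$: the functions $h_{k!}=\tfrac{1}{k!}G_{\mfa_{k!}(\vp)}$ form an increasing sequence of maximal psh weights with analytic singularities whose usc-regularized upper envelope equals $G_{\mfab(\vp)}$, and under (3) this equals $G_\vp=\vp$, so the $h_{k!}$ converge to $\vp$ off a pluripolar (hence Lebesgue-null) set, exhibiting sup-analyticity. The hard part will be step $(2)\Rightarrow(3)$: first, the bookkeeping needed to turn the pointwise estimates $h_k\ge\psi_j+O(1)$ (uniform in $k$ but not in $j$) into the correct inequality on the regularized upper envelope, and second, the nontrivial final step in which a mere equality of Lelong numbers between two comparable maximal weights is upgraded to an actual equality of those weights via the Domination Principle.
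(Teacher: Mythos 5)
The paper does not prove Proposition~\ref{sup-an}; it is quoted from \cite[Prop.~5.4, 5.5]{R13a}, so there is no in-text argument to compare yours against. Judged on its own, your cycle $(1)\Rightarrow(2)\Rightarrow(3)\Rightarrow(1)$ is the right skeleton, and $(1)\Rightarrow(2)$ via Proposition~\ref{2.2}(2) and $(3)\Rightarrow(1)$ by unwinding the construction of $G_{\mfab(\vp)}$ through the increasing $h_{k!}$ are both sound. The issue is in the key step $(2)\Rightarrow(3)$, where your derivation of ``$h_k\ge\psi_j+O(1)$ with a constant independent of $k$'' does not actually follow from what you wrote. The inequality $kc_j\log|I_j|\le\log|\mfa_k(\vp)|+O(1)$ has an $O(1)$ that a priori depends on $k$ (it comes from expressing generators of $I_j^{kc_j}$ in terms of generators of $\mfa_k(\vp)$, and nothing forces those holomorphic coefficients to stay bounded as $k$ grows), so dividing by $k$ does not yield a $k$-uniform constant. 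The correct route is to stay at the level of greenifications, which kills all $O(1)$'s: from the ideal inclusion $I_j^{kc_j}\subset\mfa_k(\vp)$ one gets $G_{\mfa_k(\vp)}\ge G_{I_j^{kc_j}}=kc_j\,G_{I_j}$ (Proposition~\ref{basic}(2)--(3)), hence $h_k\ge c_j G_{I_j}=G_{\psi_j}$, an honest inequality of fixed functions with no floating constant. Then $G_{\mfab(\vp)}=(\sup_k h_k)^*\ge G_{\psi_j}$, so $L_n(G_{\mfab(\vp)})\le L_n(\psi_j)$ for each $j$, and combined with $G_{\mfab(\vp)}\le G_\vp$ this gives equality of residual masses and, via the Domination Principle, equality of the weights. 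You should also be a bit more explicit about the reduction to rational $c_j$: if $c_j$ is irrational, take a rational $c_j'>c_j$ close to $c_j$ and set $\psi_j'=c_j'\log|I_j|$; then $\psi_j'\le\psi_j\le\vp+O(1)$ still, and $L_n(\psi_j')=(c_j'/c_j)^n L_n(\psi_j)\to L_n(\psi_j)$, so a diagonal extraction preserves the hypotheses of~$(2)$. With these two repairs, the argument goes through.
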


  We remark on the last condition:  it is clear that $G_{\mfab(\vp)}\le G_\vp$. Furthermore, the two functions coincide if and only if $L_n(G_{\mfab(\vp)})= L_n(G_\vp)$ by Lemma~\ref{DP}.

Now we have the following three corollaries of Proposition~\ref{sup-an}. 

\begin{cor}\label{greensiu}

Given a graded system of $\mfm$-primary ideals $\mfab$, the Green function $G_{\mfab}$ and Siu functions
 $\psi_{\mfab}$   have sup-analytic singularities. 
 \end{cor}

For the case of a Siu function, first note that, while its partial sums  have analytic singularities and converge to $\psi_{\mfab}$, it might be hard to apply Definition~\ref{supa} directly since it involves maximality. Instead, we will use Theorem~\ref{eaeb}. 

\begin{proof} 
 The Green function of $\psi_{\mfab}$ is equal to $G_{\mfab}$ by Corollary~\ref{ggvp} which depends on Theorem~\ref{eaeb}. By definition of sup-analytic singularities, it suffices to show that 
 $\vp = G_{\mfab}$ has sup-analytic singularities. We confirm this by applying Proposition~\ref{sup-an}, (2) to $\vp = G_{\mfab}$: 
 we have the convergence  $$(dd^c h_k)^n (\{0\}) \to (dd^c G_{\mfab})^n (\{0\}) $$ from \cite[Proof of Prop. 5.1]{R13} where $h_k = \frac{1}{k} G_{\mfa_k}$ as was in the definition of $G_{\mfab}$. 
\end{proof}

We remark that $\mfab$ in Proposition~\ref{sup-an}, (3) can be different from the given $\mfab$ as can be seen in the toric case (cf. \cite[(2.9)]{KS19}).

\begin{corollary}\label{supsup}

  Let $\vp$ be a psh weight having sup-analytic singularities. Then there exists a graded system of ideals $\mfab$ such that one (thus every) Siu function $\psi_{\mfab}$ of $\mfab$ is v-equivalent to $\vp$.

\end{corollary}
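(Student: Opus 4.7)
The plan is to take $\mfab$ to be the graded system $\mfab(G_\vp)$ produced by formula \eqref{gdd} applied to the greenification $G_\vp$, and to verify that any Siu function $\psi_{\mfab}$ of this $\mfab$ is v-equivalent to $\vp$. Since $\vp$ and $G_\vp$ are already v-equivalent by Proposition~\ref{vgreen}, it suffices to show that $\psi_{\mfab}$ is v-equivalent to $G_\vp$.

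First I would verify that the hypotheses of Proposition~\ref{sup-an}(3) hold for $G_\vp$. By Definition~\ref{supa} (the general, non-maximal case), $G_\vp$ has sup-analytic singularities; it is a maximal psh weight since Green functions on a bounded hyperconvex domain are maximal outside the singular point; and it is bounded below by log, as observed immediately after Definition~\ref{supa}. By Proposition~\ref{bbb}, the ideals $\mfa_k(G_\vp)$ are then genuine $\mfm$-primary ideals, so $\mfab(G_\vp)$ is a graded system of the type considered in Section 4. Proposition~\ref{sup-an}(3) applied to $G_\vp$ then gives
\[
G_{\mfab(G_\vp)} \;=\; G_{G_\vp}.
\]

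Combining this identity with Theorem~\ref{ksb} and Lemma~\ref{GG}(1), for every real $m>0$ one obtains the chain
\[
\JJ(m\psi_{\mfab}) \;=\; \JJ(m\cdot\mfab(G_\vp)) \;=\; \JJ\bigl(mG_{\mfab(G_\vp)}\bigr) \;=\; \JJ(mG_{G_\vp}).
\]
A final application of Proposition~\ref{vgreen}, this time with $G_\vp$ in the role of the psh function, yields $\JJ(mG_{G_\vp}) = \JJ(mG_\vp) = \JJ(m\vp)$. Hence $\psi_{\mfab}$ is v-equivalent to $\vp$, and the assertion for \emph{every} Siu function of $\mfab$ is automatic, since any two Siu functions of the same graded system share the same multiplier ideals by Theorem~\ref{ksb} and are therefore v-equivalent to each other.

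Since every step is a direct appeal to a result already available, I do not anticipate a substantive obstacle. The one place requiring minor care is verifying the hypotheses of Proposition~\ref{sup-an}(3) for $G_\vp$ rather than for $\vp$ itself (maximality, sup-analytic singularities, and being bounded below by log); all three follow from the definition of sup-analytic singularities for a general psh weight and from the standard description of $G_\vp$ recalled in Section~3.2.
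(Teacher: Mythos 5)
Your proof is correct and follows essentially the same route as the paper's own proof: take $\mfab=\mfab(G_\vp)$, apply Proposition~\ref{sup-an}(3) to $G_\vp$ to get $G_{\mfab}=G_\vp$, then chain together Theorem~\ref{ksb}, Lemma~\ref{GG}(1) and Proposition~\ref{vgreen} to transfer equality of multiplier ideals. You simply spell out the intermediate identities more explicitly (and verify the hypotheses of Proposition~\ref{sup-an}(3) for $G_\vp$ in more detail) than the paper does.
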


 \begin{proof}

  Let $G_\vp$ be the Green function of $\vp$. Applying Proposition~\ref{sup-an} (3) for $G_\vp$ in the place of $\vp$, we get $G_{\mfab (\vp)}= G_\vp$ where $\mfab (\vp)$ is defined by \eqref{gdd}. From  Proposition~\ref{vgreen} and Lemma~\ref{GG} (1), we obtain the desired v-equivalence.
 \end{proof}

\begin{corollary}\label{}

 Let $\vp$ be a psh weight. If $\vp$ is tame or, more generally, asymptotically analytic, then it has sup-analytic singularities.

\end{corollary}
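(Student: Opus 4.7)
The plan is to reduce first to the asymptotically analytic case, then to verify the characterization in Proposition~\ref{sup-an}(2) applied to the maximal psh weight $G_\vp$. Since the hierarchy in Definition~\ref{5psh} already records tame $\Rightarrow$ asymptotically analytic, it suffices to treat a psh weight $\vp$ with asymptotically analytic singularities. After normalizing so that $\vp \in \PSH^-(D)$, the definition of $G_\vp$ as an upper envelope of psh functions dominated by $\vp + O(1)$ forces $\vp \le G_\vp$ pointwise on $D$, and $G_\vp$ is a maximal psh weight. The task therefore reduces to producing psh functions $\psi_j \in \PSH(D)$ with analytic singularities such that $\psi_j \le G_\vp$ and $L_n(\psi_j) \to L_n(G_\vp)$.

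For a sequence $\ep_j \to 0^+$, I would invoke the asymptotically analytic hypothesis to pick $u_j := u_{\ep_j}$ with analytic singularities satisfying
$$(1+\ep_j) u_j + O(1) \le \vp \le (1-\ep_j) u_j + O(1).$$
Combining the left inequality with $\vp \le G_\vp$ yields $(1+\ep_j) u_j \le G_\vp + C_j$ for some constant $C_j$, so $\psi_j := (1+\ep_j) u_j - C_j$ is a psh function with analytic singularities that lies pointwise below $G_\vp$, exactly as required by Proposition~\ref{sup-an}(2).

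To close the argument, I would apply the Comparison Theorem for Lelong numbers to the two-sided bound with $u_j$, together with the scaling identity $L_n(c u) = c^n L_n(u)$ for $c > 0$, to obtain the squeeze
$$L_n(\vp) \le L_n(\psi_j) = (1+\ep_j)^n L_n(u_j) \le \left(\frac{1+\ep_j}{1-\ep_j}\right)^n L_n(\vp);$$
letting $\ep_j \to 0$ gives $L_n(\psi_j) \to L_n(\vp) = L_n(G_\vp)$, the final equality being Proposition~\ref{R06}. Proposition~\ref{sup-an}(2) then yields sup-analytic singularities for $G_\vp$, and hence for $\vp$ by Definition~\ref{supa}. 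I do not anticipate a serious obstacle here: the only subtle point is the constant shift that turns the one-sided asymptotic inequality into a genuine pointwise bound $\psi_j \le G_\vp$, after which the proof is essentially bookkeeping with Lelong numbers around the defining sandwich.
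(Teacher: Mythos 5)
Your proposal is correct and follows essentially the same route as the paper: reduce to the asymptotically analytic case, transport the two-sided bound through greenification (the paper does this via Proposition~\ref{basic} with $G_{\vp_\ep}$, which equals $\vp_\ep + O(1)$ since $\vp_\ep$ has analytic singularities, matching your $u_j$), and conclude via Proposition~\ref{sup-an}(2). The paper's write-up is terser --- it merely notes that $G_\vp$ is itself asymptotically analytic and then that "the condition in Proposition~\ref{sup-an} is satisfied" --- whereas you spell out the Lelong-number squeeze explicitly, but the underlying argument is the same.
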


 \begin{proof}

 It suffices to assume that $\vp$ is asymptotically analytic. Choosing a bounded hyperconvex domain $D$ for $\vp$, we see that the Green function $G_\vp$ is also asymptotically analytic as follows. Let $(1+\ep)\vp_\ep + O(1) \le \vp \le (1-\ep) \vp_\ep + O(1)$ be the defining condition for $\vp$ where $\vp_\ep$ is with analytic singularities. From Proposition~\ref{basic}, we get

 $$(1+\ep) G_{\vp_\ep} + O(1) \le G_\vp \le (1-\ep) G_{\vp_\ep} + O(1) $$

 \noi where $G_{\vp_\ep} = \vp_\ep + O(1) $ has analytic singularities. Therefore $G_\vp$ is also asymptotically analytic and the condition in Proposition~\ref{sup-an} is satisfied.
 \end{proof}

\medskip

\subsection{Demailly's strong continuity for sup-analytic singularities}

 Now we proceed toward showing that Demailly's strong continuity holds for sup-analytic singularities. We first recall the following results from \cite{R13}. Let $\vp \in W_0$. Let $(\vp_m)_{m \ge 1}$ be the Demailly approximation of $\vp$ (cf. Definition~\ref{5psh}). Recall from \eqref{greeni} that the Green function of $\vp$ on $D$ is denoted by $G_\vp$. In particular, $G_{\vp_m}$ is the Green function of $\vp_m$.

\begin{proposition}\label{max} {\rm \cite[Prop. 4.1, 4.3, Thm. 4.7]{R13}}
Assume $\vp \in MW_0$. Then
\begin{enumerate}
\item
$G_{\vp_{m!}}$ decreases as $m\to \infty$ to the function
$\tilde{G}_\vp:= \inf_m G_{\vp_m}\in \PSH^-(D)$, plurisubharmonic in $D$ and maximal in $D\setminus\{0\}$.
\item $L_n(G_{\vp_{m!}})$ increases as $m\to \infty$ to
$L_n(\tilde{G}_\vp)= \sup_m L_n(G_{\vp_m})$.

\item $\tilde{G}_\vp\ge G_\vp$.

\item $\tilde{G}_\vp= G_\vp$ if and only if $L_n(\tilde{G}_\vp)= L_n(G_\vp)$.

\item $L_n(\tilde{G}_\vp)= L_n(G_\vp)$ if and only if there exists a sequence $\psi_j\in \PSH(D)$ with analytic singularities such that $\psi_j\ge \vp$ and $L_n(\psi_j)\to L_n(\vp)$.

\item If $\tilde{G}_\vp= G_\vp$, then $L_n(G_{\vp_{m}})\to L_n({G}_\vp)$ and $G_{\vp_{m}}\to G_\vp$ in $L^n(D)$.
\end{enumerate}
\end{proposition}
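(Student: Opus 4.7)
The strategy is to exploit the Demailly--Ein--Lazarsfeld subadditivity of multiplier ideals, $\JJ((m_1+m_2)\vp)\subset \JJ(m_1\vp)\cdot\JJ(m_2\vp)$. Iterating gives $\JJ(km\vp)\subset \JJ(m\vp)^k$, which via the formula $\vp_m=\frac{1}{m}\log|\JJ(m\vp)|+O(1)$ translates to $\vp_{km}\le \vp_m+O(1)$. In particular $\vp_{(m+1)!}\le \vp_{m!}+O(1)$, so Proposition~\ref{basic}(2) yields a decreasing sequence $G_{\vp_{(m+1)!}}\le G_{\vp_{m!}}$ of negative psh functions, whose limit $\tilde G_\vp$ is again psh. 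It coincides with $\inf_m G_{\vp_m}$ because every $m$ divides some $k!$, making the factorial subsequence cofinal in the divisibility order (so $G_{\vp_{k!}}\le G_{\vp_m}$ whenever $m \mid k!$). Maximality of $\tilde G_\vp$ on $D\setminus\{0\}$ follows since each $G_{\vp_{m!}}$ is maximal on $D\setminus\{0\}$ by construction of the Green function, and decreasing limits of locally bounded maximal psh functions remain maximal by the Bedford--Taylor weak continuity of $(dd^c\,\cdot\,)^n$.

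\textbf{Plan for Parts (2), (3), (4).} For (2), $L_n(G_{\vp_{m!}})$ is monotone nondecreasing since $\vp_{(m+1)!}$ is more singular than $\vp_{m!}$ and the Comparison Theorem \cite{D93} reverses the singularity order; convergence to $L_n(\tilde G_\vp)$ is then the tautological instance of Proposition~\ref{2.2}(1) with $\psi=\vp=\tilde G_\vp$, while the equality with $\sup_m L_n(G_{\vp_m})$ uses the same cofinality. Part (3) is immediate from $\vp_m\ge \vp+O(1)$ (basic Demailly approximation) and Proposition~\ref{basic}(2), passing to the infimum. Part (4) is then a direct application of Proposition~\ref{2.2}(1) to the decreasing sequence $G_{\vp_{j!}}\searrow \tilde G_\vp$, bounded below by $G_\vp$ via (3): the iff statement reads $\tilde G_\vp=G_\vp$ if and only if $L_n(G_{\vp_{j!}})\to L_n(G_\vp)$, which by (2) is equivalent to $L_n(\tilde G_\vp)=L_n(G_\vp)$.

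\textbf{Plan for Part (5).} This is the technical heart of the proposition. The forward direction takes $\psi_j=\vp_{j!}$, which has analytic singularities described by $\JJ(j!\,\vp)^{1/j!}$ and satisfies $\psi_j\ge \vp$; then $L_n(\psi_j)=L_n(G_{\vp_{j!}})\to L_n(\tilde G_\vp)=L_n(G_\vp)=L_n(\vp)$ by (2), the hypothesis, and Proposition~\ref{R06}. For the backward direction, given $\psi_j\ge \vp$ with analytic singularities and $L_n(\psi_j)\to L_n(\vp)$, the crucial observation is that $\psi_j\ge \vp$ forces the multiplier-ideal inclusion $\JJ(m\psi_j)\supset \JJ(m\vp)$ for every $m>0$, hence $(\psi_j)_m\ge \vp_m+O(1)$ for the respective Demailly approximants. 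By Comparison, $L_n((\psi_j)_m)\le L_n(\vp_m)\le \sup_m L_n(\vp_m)=L_n(\tilde G_\vp)$ using (2). Because $\psi_j$ has analytic (hence tame) singularities, Conjecture~\ref{dem} is already known for $\psi_j$ by \cite{BFJ}, giving $L_n((\psi_j)_m)\to L_n(\psi_j)$ as $m\to\infty$; therefore $L_n(\psi_j)\le L_n(\tilde G_\vp)$. Letting $j\to\infty$ yields $L_n(\vp)=\lim_j L_n(\psi_j)\le L_n(\tilde G_\vp)\le L_n(G_\vp)=L_n(\vp)$ (the last inequality from (3), Comparison, and Proposition~\ref{R06}), forcing equality throughout.

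\textbf{Plan for Part (6) and main obstacle.} Under $\tilde G_\vp=G_\vp$, part (2) immediately gives $L_n(G_{\vp_{m!}})\to L_n(G_\vp)$. To pass to the full sequence, note $G_{\vp_m}\ge G_\vp$ yields the upper bound $L_n(G_{\vp_m})\le L_n(G_\vp)$, while subadditivity gives $\vp_m\le \vp_1$ (inductively from $m\vp_m\le \vp_1+(m-1)\vp_{m-1}$), hence $G_{\vp_m}\le G_{\vp_1}$, providing $L^1_{loc}$-compactness. Any subsequential $L^1_{loc}$-limit $u$ of $(G_{\vp_m})$ then satisfies $G_\vp\le u$, and comparison against the monotone factorial subsequence combined with Proposition~\ref{2.2} forces $u=G_\vp$, whence the full sequence converges to $G_\vp$ in $L^1_{loc}$; the $L^n$-convergence follows by a standard uniform integrability argument in pluripotential theory. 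The \emph{main obstacle} is the backward direction of (5): it is the step that quantitatively converts the weak qualitative input ``$\psi_j\ge\vp$ with $L_n(\psi_j)\to L_n(\vp)$'' into a Demailly-approximant comparison, and it crucially depends on the already-established case of Conjecture~\ref{dem} for analytic/tame singularities \cite{BFJ}; without this input the implication cannot close. Extending the convergence of (6) from the factorial subsequence to the full sequence is a secondary technical point relying on the compactness and extraction principles of the theory.
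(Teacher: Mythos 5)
The paper does not prove this proposition: it is imported verbatim from \cite[Prop.~4.1, 4.3, Thm.~4.7]{R13a}, so there is no in-paper argument to compare against. Judged on its own, your reconstruction of parts (1)--(5) is essentially correct and follows the route one would expect from the tools restated in this paper: subadditivity of multiplier ideals gives the monotonicity $\vp_{(m+1)!}\le\vp_{m!}+O(1)$ and the cofinality of the factorial subsequence; Bedford--Taylor continuity along decreasing sequences gives maximality of $\tilde G_\vp$; Proposition~\ref{2.2}(1) (applied once tautologically and once against $G_\vp$) gives (2) and (4); and the backward direction of (5) is correctly closed by pushing the inequality $(\psi_j)_m\ge\vp_m+O(1)$ through the known case of Conjecture~\ref{dem} for analytic (hence tame) singularities from \cite{BFJ}. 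This is the genuinely nontrivial step and you have identified it correctly.

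The one genuine gap is in part (6), in passing from the factorial subsequence to the full sequence. Extracting an $L^1_{loc}$-convergent subsequence of $(G_{\vp_m})$ and identifying its limit with $G_\vp$ does not yield $L_n(G_{\vp_m})\to L_n(G_\vp)$: residual Monge--Amp\`ere masses are not continuous under $L^1_{loc}$ convergence (this failure is the very content of Conjecture~\ref{dem}), and Proposition~\ref{2.2} only applies to monotone sequences, which $(G_{\vp_m})$ is not. The correct mechanism is quantitative subadditivity: writing $m=qk!+r$ with $0\le r<k!$ gives $\JJ(m\vp)\subset\JJ(k!\vp)^{q}\cdot\JJ(r\vp)$, hence $\vp_m\le \frac{qk!}{m}\vp_{k!}+O(1)$ after normalizing $\vp_r\le 0$, and the homogeneity of $L_n$ in the Comparison Theorem then yields $L_n(G_{\vp_m})\ge \left(\frac{qk!}{m}\right)^{n}L_n(G_{\vp_{k!}})$, so $\liminf_m L_n(G_{\vp_m})\ge \sup_k L_n(G_{\vp_{k!}})=L_n(G_\vp)$; combined with the trivial upper bound $L_n(G_{\vp_m})\le L_n(G_\vp)$ this closes (6). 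The $L^n(D)$ convergence of the functions themselves is likewise not a routine uniform-integrability remark but a separate result (Thm.~4.7 of \cite{R13a}).
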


\begin{remark1}

Note that $G_{\tilde{G}_\vp}= \tilde{G}_\vp$. Applying then (5) and (6) to $\tilde{G}_\vp$ instead of $\vp$, we get that, in general,  $L_n(\vp_m) \to L_n(\tilde{G}_\vp)$ and $G_{\vp_{m}}\to \tilde{G}_\vp$ in $L^n(D)$.
\end{remark1}

\medskip
The following can be used to show Demailly's strong continuity by reducing to maximal psh weights. 

\begin{proposition}\label{arb} 
Let $(\vp_m)_{m \ge 1}$ be the Demailly approximation sequence of a psh weight $\vp\in W_0$. Then

\begin{enumerate}
\item$ L_n(\vp_{m_k})\to L_n(\vp)$ as $k\to\infty$ if and only if
$ L_n(G_{\vp_{m_k}})\to L_n(G_\vp)$. 

\item
$L_n(\vp_m) \to L_n(\vp)$ if and only if there exists a sequence $\psi_j\in \PSH(D)$ with analytic singularities such that $\psi_j\ge \vp+O(1)$ and
$L_n(\psi_j)\to L_n(\vp)$.
\end{enumerate}
\end{proposition}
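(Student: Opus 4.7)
My plan is to handle the two parts separately: (1) is essentially an identification of terms already present in the paper, while (2) reduces to an application of Proposition~\ref{max} to the maximal weight $G_\vp$.

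For part (1), each Demailly approximant $\vp_m$ has analytic singularities, so Proposition~\ref{basic}(4) gives $G_{\vp_m} = \vp_m + O(1)$ and hence $L_n(\vp_m,0) = L_n(G_{\vp_m},0)$ for every $m$ (bounded perturbations do not affect $n$-th Lelong numbers). Combined with $L_n(\vp,0) = L_n(G_\vp,0)$ from Proposition~\ref{R06}, the two sides of the claimed equivalence have equal corresponding terms, so convergence along any subsequence $(m_k)$ on one side is literally the same statement as on the other.

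For the forward direction of part (2), I would take $\psi_j := \vp_j$: the Demailly approximation automatically satisfies $\vp \le \vp_j + O(1)$, each $\vp_j$ has analytic singularities, and the assumed convergence $L_n(\vp_j) \to L_n(\vp)$ is exactly what is required.

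The reverse direction of part (2) contains the actual content. Given $\psi_j$ as in the hypothesis, I would form $\tilde\psi_j := G_{\psi_j}$. By Proposition~\ref{basic}(4), $\tilde\psi_j = \psi_j + O(1)$ still has analytic singularities, so $L_n(\tilde\psi_j) = L_n(\psi_j) \to L_n(\vp) = L_n(G_\vp)$, using Proposition~\ref{R06} once more. Proposition~\ref{basic}(2) applied to $\vp \le \psi_j + O(1)$ gives $G_\vp \le G_{\psi_j} = \tilde\psi_j$, after harmlessly shifting each $\tilde\psi_j$ by a constant. Since $G_\vp$ is a maximal psh weight, Proposition~\ref{max}(5), applied to $G_\vp$ in the role of $\vp$, yields $L_n(\tilde G_{G_\vp}) = L_n(G_{G_\vp})$; by part~(4) of that proposition this forces $\tilde G_{G_\vp} = G_{G_\vp}$, and then (6) gives $L_n(G_{(G_\vp)_m}) \to L_n(G_{G_\vp}) = L_n(G_\vp)$ along the full sequence $m \to \infty$.

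To close the argument I need to identify the Demailly approximants of $\vp$ and of $G_\vp$. By Proposition~\ref{vgreen}, $\vp$ and $G_\vp$ are v-equivalent, so $\JJ(m\vp) = \JJ(m G_\vp)$ for every $m$; consequently $\vp_m$ and $(G_\vp)_m$ have analytic singularities described by the same ideal sheaves and therefore agree up to $O(1)$. Proposition~\ref{basic}(2) then gives $G_{\vp_m} = G_{(G_\vp)_m} + O(1)$, so the previous convergence upgrades to $L_n(G_{\vp_m}) \to L_n(G_\vp)$, and part (1) converts this into $L_n(\vp_m) \to L_n(\vp)$. The only delicate point is keeping track of the $\vp \leftrightarrow G_\vp$ switch at the right moments; once that bookkeeping is in place, the rest is a direct application of Propositions~\ref{R06},~\ref{basic},~\ref{vgreen} and~\ref{max}.
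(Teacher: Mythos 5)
Your proposal is correct and follows essentially the same route as the paper: part (1) rests on the term-by-term identities $L_n(\vp_m) = L_n(G_{\vp_m})$ (via Proposition~\ref{basic}(4)) and $L_n(\vp) = L_n(G_\vp)$ (Proposition~\ref{R06}), and part (2) reduces, after greenifying, to Proposition~\ref{max}(4)--(6) applied to $G_\vp\in MW_0$ together with the identification $\JJ(m\vp)=\JJ(mG_\vp)$ from Proposition~\ref{vgreen}. The paper's proof of (2) is a one-line pointer to (1) and Proposition~\ref{max}; your version simply spells out that bookkeeping explicitly, with no divergence in method.
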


\begin{proof}
(1) Since $\JJ(m\vp) = \JJ(m G_\vp)$ for all $m > 0$ (Proposition~\ref{vgreen}),  we have the following relation, applying Proposition~\ref{basic} and basic properties of the Demailly approximation $\vp \le \vp_m + O(1) = \frac{1}{m} \log \abs{\JJ(m \vp)} +O(1)$:

\begin{align*}
\vp+O(1)\le G_\vp &\le  \vp_m +O(1) = G_{\vp_m} \\
 =  \frac{1}{m} G_{\JJ(m\vp)} &=  \frac{1}{m} G_{\JJ(m G_\vp)}= G_{(G_\vp)_m}= (G_\vp)_m +O(1)
\end{align*}
\noi where the last equality holds since $(G_\vp)_m$ (as the $m$-th Demailly approximant of $G_\vp$) has analytic singularities. Also recall that in our definition and notation, $G_\mfa = G_{\log \abs{\mfa}}$ for an ideal $\mfa$. Hence we have

$$ L_n(\vp) = L_n(G_\vp) \le  L_n(\vp_m)= L_n(G_{\vp_m}) =  L_n((G_\vp)_m),$$
where the first equality is proved in \cite{R06}.

(2) If $\psi_j\ge \vp+O(1)$, then $G_{\psi_j}\ge G_\vp$ and the assertion follows from (1) and Proposition~\ref{max}.
\end{proof}

\medskip

Now we recall a construction from \cite{R13} of the Green function $G_{\mfbb}$ of the asymptotic multiplier ideals $\mfbb$. Denote $H_k=\frac1k G_{\mfb_k}$, then the functions $H_{m!}$ decrease to $G_{\mfbb}\in MW_0$. Since $G_{\mfb_k} \ge G_{\mfa_k}$ for any $k$, we have $G_{\mfbb} \ge G_{\mfab}$.

\begin{proposition}\label{eaeb_gr} For any graded system of  $\mfm$-primary ideals $\mfab$, we have $G_{\mfbb} = G_{\mfab}$.
\end{proposition}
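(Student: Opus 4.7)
The strategy is to establish equality of the residual Monge-Amp\`ere masses at $0$, i.e.\ $L_n(G_{\mfab},0) = L_n(G_{\mfbb},0)$, and then upgrade the already-observed inequality $G_{\mfab} \le G_{\mfbb}$ to equality via the Domination Principle \cite[Lem. 6.3]{R06}, exactly in the same manner as in the proof of Corollary~\ref{ggvp}.

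For $\mfab$, Lemma~\ref{GG}(3) immediately yields $L_n(G_{\mfab},0) = e(\mfab)$. The analogous identity $L_n(G_{\mfbb},0) = e(\mfbb)$ for the asymptotic multiplier ideals cannot be read off directly from Lemma~\ref{GG}(3), since $\mfbb$ is only subadditive rather than a graded system in the sense of Section~1. Instead I would exploit the decreasing convergence $H_{m!} \searrow G_{\mfbb}$ recorded just above the statement. Since each $H_{m!}$ and the limit $G_{\mfbb}$ lie in $MW_0$, Proposition~\ref{2.2}(1) applied with $\vp_j = H_{j!}$ and $\vp = \psi = G_{\mfbb}$ yields the convergence
\[
L_n(H_{m!}, 0) \longrightarrow L_n(G_{\mfbb}, 0).
\]
On the other hand, each $H_k = \tfrac{1}{k} G_{\mfb_k}$ has analytic singularities described by the $\mfm$-primary ideal $\mfb_k$, so by \cite[Lem. 2.1]{D09} combined with homogeneity of Lelong numbers, $L_n(H_k,0) = k^{-n} e(\mfb_k)$, which tends to $e(\mfbb)$ along $k = m!$ by definition of the latter. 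Hence $L_n(G_{\mfbb},0) = e(\mfbb)$.

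Combining with Theorem~\ref{eaeb}, we obtain
\[
L_n(G_{\mfab},0) = e(\mfab) = e(\mfbb) = L_n(G_{\mfbb},0).
\]
Since $G_{\mfab}, G_{\mfbb} \in MW_0$ with $G_{\mfab} \le G_{\mfbb}$ and matching $n$-th Lelong numbers at $0$, the Domination Principle forces $G_{\mfab} = G_{\mfbb}$. The only mildly delicate point to check is that the representatives of $H_{m!}$ and $G_{\mfbb}$ on $D$ satisfy the negativity and boundary-vanishing hypotheses needed for Proposition~\ref{2.2}(1) and \cite[Lem. 6.3]{R06}, but this is already built into the Green function construction recalled from \cite{R13a} above the statement; beyond that, the entire argument is just bookkeeping linking Lemma~\ref{GG}(3), Proposition~\ref{2.2}(1), Theorem~\ref{eaeb}, and the Domination Principle.
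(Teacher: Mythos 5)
Your proof is correct and follows the same route as the paper: the paper's one-line proof (``follows from Theorem~\ref{eaeb} together with \cite[Lem.~6.3]{R06}'') implicitly relies on exactly the chain $L_n(G_{\mfbb})=e(\mfbb)=e(\mfab)=L_n(G_{\mfab})$ together with $G_{\mfbb}\ge G_{\mfab}$, which you spell out. The only genuine addition on your part is the careful justification of $L_n(G_{\mfbb},0)=e(\mfbb)$ via Proposition~\ref{2.2}(1) and \cite[Lem.~2.1]{D09} (since Lemma~\ref{GG}(3) does not apply verbatim to the subadditive system $\mfbb$), and that reasoning is sound.
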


\begin{proof} 

From \cite[Prop. 5.1]{R13}, we have the equivalence : $G_{\mfbb} = G_{\mfab}$ if and only if $e(\mfab) = e(\mfbb)$.  The latter equality  is now given by Theorem~\ref{eaeb}. 
\end{proof}

Now we specify this last result for our choice of the graded system $\mfab$ when we prove the following

\begin{thm}\label{sup-an-conj} Demailly's strong continuity holds true for any $\vp\in W_0$ with sup-analytic singularities.
\end{thm}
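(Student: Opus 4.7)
The plan is to reduce Conjecture~\ref{dem} for a general sup-analytic psh weight $\vp$ to the already-established special case of Green functions of graded systems of $\mfm$-primary ideals (Corollary~\ref{Conjec}).

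First, I would reduce the problem from $\vp$ to $G_\vp$. By Proposition~\ref{arb} (1), the convergence $L_n(\vp_m, 0) \to L_n(\vp, 0)$ for the Demailly approximation $(\vp_m)$ of $\vp$ is equivalent to the convergence $L_n(G_{\vp_m}, 0) \to L_n(G_\vp, 0)$. In fact, Proposition~\ref{vgreen} gives $\JJ(m\vp)=\JJ(m G_\vp)$ for all $m>0$, so $\vp_m$ and $(G_\vp)_m$ share the same analytic singularities; together with Proposition~\ref{R06} ($L_n(\vp,0)=L_n(G_\vp,0)$) this means Conjecture~\ref{dem} for $\vp$ is equivalent to Conjecture~\ref{dem} for the maximal psh weight $G_\vp$. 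It therefore suffices to handle $G_\vp$.

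Next, since $\vp$ has sup-analytic singularities, by Definition~\ref{supa} its Green function $G_\vp \in MW_0$ is a maximal psh weight with sup-analytic singularities. The remark following Definition~\ref{supa} ensures $G_\vp$ is bounded below by log, so the graded system $\mfab(G_\vp)=(\mfa_k)_{k\ge 1}$ of $\mfm$-primary ideals defined by \eqref{gdd} (with $\vp$ there replaced by $G_\vp$) consists of nonzero ideals. The characterization of sup-analytic singularities for maximal weights in Proposition~\ref{sup-an} (3) then yields the identification $G_{\mfab(G_\vp)}=G_\vp$.

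Finally, I would invoke Corollary~\ref{Conjec}, which (via Boucksom's Theorem~\ref{eaeb}) asserts that Conjecture~\ref{dem} holds for the Green function of any graded system of $\mfm$-primary ideals. Applying it to $\mfab(G_\vp)$ gives Conjecture~\ref{dem} for $G_{\mfab(G_\vp)}=G_\vp$, and hence for $\vp$ by the first step. All of the real work is done by Propositions~\ref{arb}, \ref{sup-an} and Corollary~\ref{Conjec}; the only genuine conceptual input is the observation that sup-analyticity is exactly what is needed to write $G_\vp$ as the Green function of a concrete graded system of ideals, so that the algebro-geometric machinery of Section~3 becomes applicable.
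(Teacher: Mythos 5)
Your proof is correct and follows the same overall strategy as the paper: reduce to the maximal weight $G_\vp$, identify $G_\vp$ with $G_{\mfab(G_\vp)}$ via Proposition~\ref{sup-an}~(3), and then appeal to Boucksom's Theorem~\ref{eaeb} to close the argument. The one genuine difference is in the final step: you invoke Corollary~\ref{Conjec} directly, which already packages the needed conclusion ``Conjecture~\ref{dem} holds for $G_{\mfa_\bullet}$'' coming from Theorems~\ref{eaebeq} and \ref{eaeb}. The paper instead goes through Proposition~\ref{eaeb_gr} to get $G_{\mfbb}=G_{\mfab}$, exhibits the explicit decreasing analytic sequence $\psi_m = H_{m!} \downarrow G_{\mfbb}=G_\vp$ together with $L_n(\psi_m)\to L_n(\vp)$, and then applies Propositions~\ref{max} and \ref{arb}~(2). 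Your shortcut is cleaner and entirely legitimate: once $G_\vp$ is recognized as the Green function of a graded system of $\mfm$-primary ideals, Corollary~\ref{Conjec} does the rest. The paper's longer route has the minor advantage of displaying the concrete monotone analytic approximation from above (tying in with the machinery of Section 5), but as a proof of the theorem both are equally valid and both ultimately rest on Theorem~\ref{eaeb}. Your preliminary reduction from $\vp$ to $G_\vp$ (via Propositions~\ref{vgreen}, \ref{R06} and the equality of Demailly approximants) and the verification that $\mfab(G_\vp)$ consists of $\mfm$-primary ideals (via boundedness below by log) are also correctly handled.
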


\begin{proof}

Given such $\vp$, take  the graded system of ideals $\mfab :=\mfab(G_\vp)$ defined by \eqref{gdd}.   Let $\mfbb$ be the corresponding asymptotic multiplier ideals of $\mfab$.

Then, by Propositions~\ref{sup-an} and \ref{eaeb_gr}, we have $G_{\mfbb} = G_{\mfab(G_\vp)} = G_{G_\vp} = G_\vp$. By the construction, the function $G_{\mfbb}$ is the limit of a decreasing sequence of functions $\psi_m:=H_{m!}$ with analytic singularities such that $L_n(\psi_m)\to L_n(G_{\mfbb})=L_n(\vp)$. Therefore, Propositions~\ref{max} and \ref{arb} imply the convergence  $L_n(\vp_m) \to L_n(\vp)$ where $\vp_m$ is the Demailly approximation of $\vp$.
\end{proof}

  We remark that a maximal psh weight with Demailly’s strong continuity property was called inf-analytic in \cite[Thm. B]{R13}. Hence Theorem~\ref{sup-an-conj} now says that sup-analyticity implies inf-analyticity (cf. \cite[Thm. C]{R13}). 

\medskip

  In the rest of this subsection, we present Proposition~\ref{plusmax}. When combined with Theorem~\ref{sup-an-conj},   this provides an instance of the (unknown) implication : `if Demailly's strong continuity holds true for psh weights $\vp,\psi\in W_0$, then Demailly's strong continuity holds for $\vp+\psi$ and $\max\{\vp,\psi\}$'.

\begin{proposition}\label{plusmax}

If $\vp,\psi\in W_0$ have sup-analytic singularities, then so do $\vp+\psi$ and $\max\{\vp,\psi\}$.
\end{proposition}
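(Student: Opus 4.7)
The plan is to verify sup-analyticity of $u=\vp+\psi$ and $u=\max\{\vp,\psi\}$ by producing, in each case, approximations satisfying Proposition~\ref{sup-an}(2) for the maximal psh weight $G_u$: a sequence $u_j$ of psh functions with analytic singularities such that $u_j\le G_u$ and $L_n(u_j,0)\to L_n(G_u,0)=L_n(u,0)$. As inputs, I would fix the increasing sequences $\vp_j\nearrow G_\vp$ and $\psi_j\nearrow G_\psi$ of maximal psh weights with analytic singularities supplied by Definition~\ref{supa} applied to $\vp$ and $\psi$, and take $u_j:=\vp_j+\psi_j$ in the sum case and $u_j:=\max\{\vp_j,\psi_j\}$ in the max case; each such $u_j$ has analytic singularities.

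The containment $u_j\le G_u$ is immediate in the max case: by monotonicity of the Green function (Proposition~\ref{basic}(2)) we have $G_\vp,G_\psi\le G_u$ and hence $\max\{\vp_j,\psi_j\}\le\max\{G_\vp,G_\psi\}\le G_u$. In the sum case, I would introduce the graded subsystem $\mfc_k:=\mfa_k(\vp)\,\mfa_k(\psi)\subset\mfa_k(\vp+\psi)$ and compute $G_{\mfc_\bullet}=G_\vp+G_\psi$ by using the identity $G_{\mfa\mfb}=G_\mfa+G_\mfb+O(1)$ for $\mfm$-primary ideals, the monotonicity $h_k\le h_{km}$ of Definition~\ref{Gadot}, and sup-analyticity of $\vp,\psi$ (so that $h_{k!}^\vp\nearrow G_\vp$ and likewise for $\psi$). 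Combining this with the general inclusion $G_{\mfab(v)}\le G_v$ applied to $v=\vp+\psi$ yields $G_\vp+G_\psi\le G_u$, hence $\vp_j+\psi_j\le G_\vp+G_\psi\le G_u$.

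The main obstacle is the Lelong-number convergence $L_n(u_j,0)\to L_n(G_u,0)$. Since each $u_j$ has analytic singularities, $L_n(u_j,0)$ is a Samuel multiplicity; in the sum case the binomial expansion of $(dd^c u_j)^n$ writes it as a combination of mixed residual Monge--Amp\`ere masses of $\vp_j,\psi_j$ at $0$. I would then invoke Bedford--Taylor-type continuity of mixed Monge--Amp\`ere operators along increasing sequences of maximal psh weights bounded below by log (sup-analyticity supplies the latter via Proposition~\ref{bbb}) to pass to the limit, obtaining $L_n(u_j,0)\to L_n(G_\vp+G_\psi,0)$. The final identification $L_n(G_\vp+G_\psi,0)=L_n(u,0)=L_n(G_u,0)$ is the technical crux: it is a v-equivalence-invariance statement for (mixed) residual masses inside the sup-analytic class, which I would establish by combining Proposition~\ref{R06} (v-invariance of the top-degree diagonal terms) with the graded-system identity Proposition~\ref{eaeb_gr} (which itself rests on Theorem~\ref{eaeb}) to handle the mixed terms, and close the argument via the Domination Principle \cite[Lem.~6.3]{R06} already cited in the proof of Corollary~\ref{ggvp}. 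The max case proceeds in parallel, with $L_n(\max\{\vp_j,\psi_j\},0)$ expressed via the Samuel multiplicity of $\mfa_k(\vp)+\mfa_k(\psi)$ and the same mixed-mass continuity and v-equivalence invariance closing the argument.
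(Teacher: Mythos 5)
Your overall strategy (produce analytic approximants $u_j\le G_u$ with $L_n(u_j)\to L_n(G_u)$ and invoke Proposition~\ref{sup-an}(2)) is reasonable, but the two steps you flag as the crux are exactly where the argument breaks down, and you lean on a claim the paper explicitly disavows.

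First, the identity $G_{\mfa\mfb}=G_\mfa+G_\mfb+O(1)$, equivalently $G_{u+v}=G_u+G_v$, is \emph{false} in general: the remark immediately following this proposition in the paper states that $G_{u+v}\ge G_u+G_v$ always, but equality ``is quite exceptional,'' holding for toric weights in the polydisk but already failing in the ball (with a concrete counterexample cited in \cite{R12}). So the computation $G_{\mfc_\bullet}=G_\vp+G_\psi$ does not go through. (The inequality $G_\vp+G_\psi\le G_{\vp+\psi}$ you ultimately want is elementary anyway from Proposition~\ref{basic}(1) and the supremum definition of $G$, with no graded-system detour needed.)

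Second, and more seriously, the step ``invoke Bedford--Taylor-type continuity of mixed Monge--Amp\`ere operators along increasing sequences\ldots to pass to the limit, obtaining $L_n(u_j,0)\to L_n(G_\vp+G_\psi,0)$'' is not a standard fact. Bedford--Taylor gives weak convergence of (mixed) Monge--Amp\`ere measures, but convergence of the mass at the single point $0$ along an increasing sequence is precisely the delicate phenomenon that Conjecture~\ref{dem} and Proposition~\ref{2.2} are about; it does not follow from general continuity theorems, and assuming it for the mixed terms is essentially assuming what must be proved. Likewise, the ``v-equivalence invariance'' you invoke to get $L_n(G_\vp+G_\psi,0)=L_n(\vp+\psi,0)$ is an instance of Conjecture~\ref{dem2}, so appealing to it here would be circular.

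What the paper does instead is a short sandwich argument that sidesteps both issues. Take $u_j:=G_{\vp_j+\psi_j}$, which is a \emph{maximal} weight with analytic singularities (equal to $\vp_j+\psi_j$ up to $O(1)$ by Proposition~\ref{basic}(4)), so the $u_j$ increase a.e.\ to some $G\in MW_0$. From $\vp_j+\psi_j\le G_{\vp+\psi}$ one gets $u_j\le G_{\vp+\psi}$ and hence $G\le G_{\vp+\psi}$; from $u_j\ge\vp_j+\psi_j$ one gets $G\ge G_\vp+G_\psi\ge\vp+\psi+O(1)$, so by the comparison theorem and Proposition~\ref{R06} the residual mass of $G$ is at most that of $G_{\vp+\psi}$. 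Combined with $G\le G_{\vp+\psi}$, the Domination Principle \cite[Lem.~6.3]{R06} forces $G=G_{\vp+\psi}$, and Definition~\ref{supa} directly gives sup-analyticity. No mixed-mass continuity and no identity $G_{u+v}=G_u+G_v$ are required. Your max-case containment argument is fine, and the same sandwich applied to $G_{\max\{\vp_j,\psi_j\}}$ handles that case as well.
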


\begin{proof}

Let $\vp_j$ and $\psi_j$ be maximal psh weights with analytic singularities, increasing almost everywhere to $G_\vp$ and $G_\psi$, respectively; we can always assume $ \vp_j=G_{\vp_j}\sim c_j\log|f_j|$ and $\psi_j= G_{\psi_j}\sim d_j\log|g_j|$ with rational $c_j$ and $d_j$. Then ${\vp_j}+{\psi_j}$ have analytic singularities as well and increase almost everywhere to $G_\vp+G_\psi\le G_{\vp+\psi}$. The sequence  $u_j:=G_{\vp_j+\psi_j}$ increases almost everywhere to a function $G\in MW_0$ since $\vp_j+\psi_j\le G_{\vp+\psi}$, we have $u_j\le G_{\vp+\psi}$ as well and thus $G\le G_{\vp+\psi}$.

On the other hand, $u_j \ge \vp_j + \psi_j$, and letting $j \to\infty$ we get $G\ge G_{\vp} + G_{\psi}\ge \vp+\psi +O(1)$, so the residual Monge-Amp\`ere mass of $G$ does not exceed that of $\vp +\psi$, and so of $G_{\vp+\psi}$. Applying Lemma~\ref{DP}, we get $G=G_{\vp+\psi}$. By Definition~\ref{supa}, this proves the claim for $\vp+\psi$.
 Similar arguments for $\max\{\vp_j,\psi_j\}$ complete the proof.
\end{proof}

\begin{remark1}

 In general, for two psh weights $u$ and $v$ on a bounded hyperconvex domain, we always have $G_{u+v} \ge G_u + G_v$, but the equality $G_{u+v} = G_u + G_v$ is quite exceptional. It holds, for example, for toric $u$ and $v$ in the unit polydisk, but not necessarily in the ball (unless $G_u=G_v$). A concrete example is given at the end of \cite{R12}.

\end{remark1}

\medskip

\subsection{Analytic approximations from below}

 Let $\vp$ be a psh function. In general, we may call a sequence of psh functions $(\vp_m)_{m \ge 1}$ as a \emph{general analytic approximation} of $\vp$ if each $\vp_m$ has analytic singularities and $\vp_m$ converges to $\vp$ (as $m \to \infty$) in various senses.

 While the Demailly approximation of a psh function (cf. \cite{D11}) can be considered as the `canonical' analytic approximation,  sometimes one may also consider other analytic approximations. In particular, as opposed to the Demailly approximation being an approximation from above, one might as well look for analytic approximations from below, if exist.

 Let $(\vp_m)_{m \ge 1}$ be a general analytic approximation of a psh function $\vp$. In this final subsection, we would  like to address the following question: {\it Given a psh weight $\vp\in W_0$,  how far is the multiplier ideal $\JJ(\vp)$ from the multiplier ideals $\JJ(\vp_m)$ of the approximants?}

Since $\vp$ and the greenification $G_\vp$ are valuatively equivalent, we can always assume here $\vp\in MW_0$. Evidently, for the Demailly approximants $\vp_m$ one always has $\JJ(\vp)\subseteq\JJ(\vp_m)$ for all $m>1$. However it is easy to see that in general one cannot hope that $\JJ(\vp)=\JJ(\vp_m)$ for some $m$ sufficiently big. Indeed, let $\vp=2\log|\mathfrak{m}|$ in $\CC^2$ . By Howald's theorem \cite{Ho}, $\JJ(m\vp)$ is generated by monomials of degree $2m-1$, so $\vp_m=\frac{2m-1}{m} \log|z| +O(1)$. Therefore, $\JJ(\vp_m)=\OO_0$ for all $m>1$ while $\JJ(\vp)=\mathfrak{m}$.

\begin{remark1}

 On the other hand, Demailly \cite[p.64, Remark 3]{D13} showed that the equality $\JJ(\vp) = \JJ((1+\frac{1}{m}) \vp_m)$ holds, using the strong openness theorem of \cite{GZ}. 

\end{remark1}

If, however, $\vp\in W_0$ has sup-analytic singularities, then we have another natural analytic singularities approaching that of $\vp$, or more precisely, that of $G_\vp$.

\begin{proposition} If $\vp\in W_0$ has sup-analytic singularities, then there exists a sequence of psh functions $\psi_k\in \PSH(D)$ with analytic singularities, increasing to $G_\vp$ and having the following property: For every real $m \ge 1$, there exists $k_m \ge 1$ such that
$$ \JJ(m\psi_k)=\JJ(m\vp), \ \forall k\ge k_m .$$
\end{proposition}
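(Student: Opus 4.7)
The natural candidate sequence comes from the Green functions of the ideals $\mfa_k$ in the graded system $\mfab = \mfab(G_\vp)$ defined by \eqref{gdd}. Since $\vp \in W_0$ has sup-analytic singularities, by Definition~\ref{supa} the function $G_\vp$ does as well, and Proposition~\ref{sup-an} (3) gives the key identity $G_{\mfab(G_\vp)} = G_\vp$. Set
\begin{equation*}
\psi_k \;:=\; h_{k!} \;=\; \tfrac{1}{k!}\, G_{\mfa_{k!}(G_\vp)} \;=\; \tfrac{1}{k!} \log \abs{\mfa_{k!}(G_\vp)} + O(1),
\end{equation*}
where the last equality uses Proposition~\ref{basic} (4). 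Each $\psi_k$ has analytic singularities, and $\psi_k \le \psi_{k+1}$ since $(k+1)!$ is divisible by $k!$ (so $h_{k!} \le h_{(k+1)!}$ by $\mfa_{k!}^{k+1} \subseteq \mfa_{(k+1)!}$). By the construction of $G_{\mfab}$ recalled in \S 5.1, the sequence $(\psi_k)$ increases to $G_{\mfab(G_\vp)} = G_\vp$ almost everywhere.

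To identify the multiplier ideals, compute $\JJ(m\psi_k) = \JJ(\tfrac{m}{k!}\cdot \mfa_{k!})$, which is just the algebraic multiplier ideal attached to the analytic singularity $\tfrac{m}{k!} \log\abs{\mfa_{k!}}$. On the other hand, by the very definition of the asymptotic multiplier ideal (Definition~\ref{am}) together with the standard containment $\JJ(\tfrac{m}{q}\cdot \mfa_q) \subseteq \JJ(\tfrac{m}{pq}\cdot \mfa_{pq})$ for every positive integer $p$, Noetherianness ensures the existence, for each fixed $m$, of some $q_0 = q_0(m)$ such that
\begin{equation*}
\JJ\bigl(\tfrac{m}{q_0!} \cdot \mfa_{q_0!}\bigr) \;=\; \mfb_m \;=\; \JJ(m\cdot \mfab).
\end{equation*}
Since $k!$ is a multiple of $q_0!$ for every $k \ge q_0$, the same maximality argument (combined with the containment above) forces $\JJ(\tfrac{m}{k!}\cdot \mfa_{k!}) = \mfb_m$ for all $k \ge k_m := q_0$.

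It remains to match $\mfb_m$ with $\JJ(m\vp)$. Since $\mfb_m$ is, by Theorem~\ref{ksb} (applied to any Siu function of $\mfab$), equal to $\JJ(m\, \vp_{\mfab})$, and since $\vp_{\mfab}$ and $G_{\mfab} = G_\vp$ are v-equivalent by Lemma~\ref{GG} (1), we get $\mfb_m = \JJ(m G_\vp) = \JJ(m\vp)$, where the last equality is Proposition~\ref{vgreen} together with the v-equivalence of $\vp$ and $G_\vp$. Stringing these identifications together yields $\JJ(m\psi_k) = \JJ(m\vp)$ for all $k \ge k_m$.

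The one step that needs a moment of care is the eventual stability $\JJ(\tfrac{m}{k!}\cdot \mfa_{k!}) = \mfb_m$ for $k \gg 0$: one must invoke both the ascending chain condition and the divisibility-monotonicity of $q \mapsto \JJ(\tfrac{m}{q}\cdot \mfa_q)$ from \cite[\S 11.1]{L}. Once this is in hand, the rest is a routine assembly of results already established earlier in the paper.
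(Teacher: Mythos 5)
Your proof is correct and builds the same candidate sequence $\psi_k = h_{k!} = \tfrac{1}{k!}G_{\mfa_{k!}(G_\vp)}$ that the paper uses, but you establish the key stabilization $\JJ(m\psi_k) = \JJ(m\vp)$ by a genuinely different route. The paper invokes the strong effective openness property for increasing sequences of psh functions (citing \cite{Hi}, which rests on the Guan--Zhou strong openness theorem) applied directly to $m h_{k!} \nearrow m G_\vp$, getting $\JJ(m h_{k!}) = \JJ(m G_\vp)$ for $k$ large in one stroke. You instead work on the algebraic side: you observe that $\JJ(m\psi_k) = \JJ(\tfrac{m}{k!}\cdot\mfa_{k!})$ since $\psi_k$ has analytic singularities (via Proposition~\ref{basic}(4)), then use the divisibility monotonicity of $q \mapsto \JJ(\tfrac{m}{q}\cdot\mfa_q)$ together with the ascending chain condition to conclude that this family stabilizes at the asymptotic multiplier ideal $\mfb_m$ along sufficiently divisible indices, and finally match $\mfb_m = \JJ(mG_\vp) = \JJ(m\vp)$ using Lemma~\ref{GG}(1), Proposition~\ref{sup-an}(3) and Proposition~\ref{vgreen}. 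Both arguments are valid. Yours is more explicit about the ideal-theoretic mechanism and is appealingly algebraic; the paper's is shorter because it delegates the hard step to a single analytic black box. It is worth noting, though, that the two routes are not as disjoint in depth as they might first appear: the equality $\mfb_m = \JJ(m G_{\mfab})$ in Lemma~\ref{GG}(1), which you rely on, itself encodes a strong-openness-type input (indeed its proof via Theorem~\ref{ksb} ultimately rests on the openness theorem). So you have not avoided the deep analytic ingredient, merely relocated it into a cited lemma while making the purely Noetherian part transparent.
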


\begin{proof}

 For the Green function $G_\vp$, take the ideals $ \mfa_k := \mfa_k(G_\vp)$ by \eqref{gdd}. As was in the definition of $G_{\mfab}$, let $h_k=\frac1{k} G_{\mfa_{k}}$, where $G_{\mfa_k}$ are the Green functions of the ideals $ \mfa_k$. Then $h_{k!}$ increase almost everywhere to $G_{\mfab}$ which is equal to $G_\vp$ by Proposition~\ref{sup-an} (3).
 
 Now we use the same argument as in Lemma~\ref{GG} (1): by  the version of the strong openness theorem for increasing sequences (see, e.g., \cite{Hi17}, \cite{Le17}), we have $\JJ(h_{k!})=\JJ(G_\vp)$ for all $k$ sufficiently big. Moreover, applying this to the functions $m h_{k!}$ increasing to $mG_\vp$, we get $\JJ(mh_{k!})=\JJ(mG_\vp)$ for all $k\ge k_m$. Since $\JJ(m\vp)= \JJ(mG_\vp)$ for all $m \ge 1$ by Proposition~\ref{vgreen}, the proof is complete when we take $\psi_k = h_{k!}$. 
\end{proof}

\section{Appendix  by S\'ebastien Boucksom: Asymptotic multiplicities of \\ graded systems of ideals }

 \footnote{\;\; S\'ebastien Boucksom :  CNRS–CMLS, 
\'Ecole Polytechnique, F-91128 Palaiseau Cedex, France \\
 \; e-mail: sebastien.boucksom@polytechnique.edu}

 In this  algebraic appendix, the proof of Theorem~\ref{eaeb} is given.  It is based on the intersection theory of nef Weil b-divisors developed in \cite{BFJ}, \cite{BFF}.

 Let $X$ be an irreducible smooth complex variety.   Recall from \cite[\S 1.2]{BFJ} that,  given an ideal sheaf $\mfa$ on $X$, a b-divisor $Z(\mfa)$ on $X$ is defined by letting $\ord_E Z(\mfa) = - \ord_E (\mfa)$ for every prime divisor $E$ lying over $X$ (so that $Z(\mfa) \le 0$). It is a Cartier b-divisor determined on the normalized blowup of $X$ along $\mfa$ (cf. \cite[Example~1.4]{BFF}). We remark that the notion of a b-divisor was first introduced by Shokurov (cf. \cite{Sh03}) in the context of birational geometry and the minimal model program. See \cite[\S 1]{BFF} for an exposition of basic properties of b-divisors.

 Now recall that a graded system of ideals in $\OO_X$  is a sequence of ideal sheaves $(\mfa_m)_{m \ge 1}$ satisfying $\mfa_m \cdot \mfa_k \subset \mfa_{m+k}$ for all $m, k \ge 1$. Given $\mfab$,  an $\RR$-Weil b-divisor $\displaystyle Z(\mfab) := \lim_{m \to \infty} \frac{1}{m} Z(\mfa_m)  $ is defined  (see \cite[Prop. 2.1]{BFF}) where the coefficient-wise limit exists due to the fact that $ \displaystyle \frac{1}{l} Z(\mfa_l) \le \frac{1}{m} Z(\mfa_m)$ for every $m$ divisible by $l$. Note that the limit b-divisor $Z(\mfab)$ is a nef b-divisor, but not necessarily b-Cartier, unlike each $Z(\mfa_m)$.

 Another ingredient we need is the following intersection number of nef $\RR$-Weil b-divisors $Z_1, \ldots, Z_n$ due to \cite[Def. 4.3]{BFJ}  where $n = \dim X$:

 \begin{equation*}
  \langle Z_1, \ldots, Z_n \rangle := \inf \{ \langle W_1, \ldots, W_n \rangle : W_j \ge Z_j  \} \in [-\infty, 0]
 \end{equation*}

\noi where $W_j \; (1 \le j \le n)$ is a nef $\RR$-Cartier b-divisor.

\begin{proof}[Proof of Theorem~\ref{eaeb}]

 First we recall the following relation between the intersection numbers and mixed multiplicities.
 For $\mfm$-primary ideals $\mfa_1, \ldots, \mfa_n$ on $X$, we have  (cf. \cite[(4.3)]{BFJ})
 $$- \langle  Z(\mfa_1), \cdots, Z(\mfa_n)  \rangle = e( \mfa_1, \cdots, \mfa_n)$$ where the RHS is the mixed multiplicity of the ideals.  The equality is checked on a birational model $X' \to X$ which dominates the blowup of each $\mfa_i$   as in \cite[p.92]{L}. Thus we have $e(\mfa) =   - \langle Z(\mfa ) \rangle^n$ for a $\mfm$-primary ideal $\mfa$ where $e(\mfa)$ is the Samuel multiplicity.

On the other hand, the limit b-divisor $Z(\mfab)$ is also equal to the increasing (i.e. nondecreasing) limit
$ \displaystyle Z(\mfab) = \lim_{m \to \infty}   \frac{1}{m!}  Z (\mfa_{m!})$ from \cite[(4.17)]{BFF}. Therefore we have the convergence
$$\frac{1}{k^n} e(\mfa_k) = -\langle \frac{1}{k} Z(\mfa_k), \cdots, \frac{1}{k} Z(\mfa_k) \rangle \to  -\langle Z(\mfab), \cdots, Z(\mfab) \rangle $$ first for $k=m!$ by \cite[Thm. A.1]{BFF} and then for general $k$ due to the fact that the limit $\displaystyle e(\mfab) = \lim_{k \to \infty} \frac{1}{k^n} e(\mfa_k) $ exists by \cite[Cor. 1.5]{M02}. In other words,  we have $$e(\mfab) = -\langle Z(\mfab), \cdots, Z(\mfab) \rangle .$$

 With similar arguments for the asymptotic multiplier ideals $\mfbb$, we see that $e(\mfbb) =  \lim_{k \to \infty} \frac{1}{k^n} e(\mfb_k)$ (the limit exists due to \cite[Cor. 2.3]{M02}) satisfies

 $$ e(\mfbb) = -\langle Z(\mfbb), \cdots, Z(\mfbb) \rangle $$
 \noi where  $\displaystyle Z(\mfbb) := \lim_{k \to \infty}   \frac{1}{k}  Z (\mfb_k) $, which is  a decreasing limit coefficientwise. Here \cite[Prop. 4.4]{BFJ} was used to ensure that the intersection number is continuous along the decreasing sequence $\displaystyle \frac{1}{k}  Z (\mfb_k) $.

 Finally we have the relation  $Z(\mfab) = Z(\mfbb)$ from  \cite[Prop. 2.13 (ii)]{JM}.  Combining with the above relations, it follows that $$e(\mfa_\bullet) = -\langle Z(\mfab), \cdots, Z(\mfab) \rangle = -\langle Z(\mfbb), \cdots, Z(\mfbb) \rangle = e(\mfb_\bullet). $$
\end{proof}

\medskip

\footnotesize

\bibliographystyle{amsplain}

\qa
\\

\medskip

\normalsize

\noi \textsc{Dano Kim}

\noi Department of Mathematical Sciences and Research Institute of Mathematics

\noi Seoul National University, 08826  Seoul, Korea

\noi Email address: kimdano@snu.ac.kr

\qa
\qa

\noi

\noi \textsc{Alexander Rashkovskii}

\noi Tek/Nat, University of Stavanger, 4036 Stavanger, Norway

\noi Email address: alexander.rashkovskii@uis.no
\noi

\end{document}